\documentclass[12pt]{amsart}
\usepackage[utf8]{inputenc}

\usepackage{tikz-cd}
\usepackage{verbatim} %
\usepackage{graphicx}
\usepackage{mathtools}
\usepackage{mathrsfs}
\usepackage{enumerate}
\usepackage[T2A]{fontenc}
\usepackage[utf8]{inputenc}
\usepackage{MnSymbol}

\usepackage{tikz-cd}

\DeclareMathAlphabet\mathbb{U}{msb}{m}{n}

\def\ZZ{\mathbb{Z}}

\numberwithin{equation}{section}

\newtheorem{Theorem}{Theorem}[section]
\newtheorem{Corollary}[Theorem]{Corollary}
\newtheorem{Lemma}[Theorem]{Lemma}
\newtheorem{Proposition}[Theorem]{Proposition}
\theoremstyle{definition}
\newtheorem{Definition}[Theorem]{Definition}
\newtheorem{Remark}[Theorem]{Remark}

\textheight=24cm
\topmargin=-2cm
\hoffset=-2.5cm
\textwidth=18cm

\def\epi{\twoheadrightarrow}
\def\mono{\rightarrowtail}
\def\CC{\mathcal C}
\def\DD{\mathcal D}
\def\BB{{M}}

\begin{document}

\title[Right exact group completion]{ Right exact group completion as a transfinite invariant of homology equivalence}

\author{Sergei O. Ivanov}
\address{
Laboratory of Modern Algebra and Applications,  St. Petersburg State University, 14th Line, 29b,
Saint Petersburg, 199178 Russia}\email{ivanov.s.o.1986@gmail.com}

\author{Roman Mikhailov}
\address{Laboratory of Modern Algebra and Applications, St. Petersburg State University, 14th Line, 29b,
Saint Petersburg, 199178 Russia and St. Petersburg Department of
Steklov Mathematical Institute} \email{rmikhailov@mail.ru}

\thanks{The main result of the paper (Theorem 4.1) was obtained under the support
of the Russian Science Foundation grant N 16-11-10073. The authors also are
supported by the grant of the Government of the Russian Federation for the
state support of scientific research carried out under the supervision of leading
scientists, agreement 14.W03.31.0030 dated 15.02.2018.}

\maketitle

\begin{abstract}
We consider a functor from the category of groups to itself $G\mapsto \mathbb Z_\infty G$ that we call right exact $\mathbb Z$-completion of a group. It is connected with the pronilpotent completion $\widehat{G}$ by the short exact sequence $1\to {\varprojlim}^1\: M_n G \to \mathbb Z_\infty G \to \widehat G \to 1,$ where $M_n G$ is $n$-th Baer invariant of $G.$ We prove that $\mathbb Z_\infty (\pi_1X)$ is an invariant of homological equivalence of a space $X$. Moreover, we prove an analogue of Stallings' theorem: if $G\to G'$ is a 2-connected group homomorphism, then $\mathbb Z_\infty G\cong \mathbb Z_\infty G'.$ We give examples of $3$-manifolds $X,Y$ such that $ \widehat{\pi_1 X}\cong \widehat{\pi_1 Y}$ but $\mathbb Z_\infty \pi_1X\not \cong \mathbb Z_\infty \pi_1Y.$ We prove that for a group $G$ with finitely generated $H_1G$ we have $(\mathbb Z_\infty G)/ \gamma_\omega= \hat G.$ So the difference between $\hat G$ and $\mathbb Z_\infty G$ lies in $\gamma_\omega.$ This allows us to treat $\mathbb Z_\infty \pi_1X$ as a transfinite invariant of $X.$ The advantage of our approach is that it can be used not only for $3$-manifolds but for arbitrary spaces.
\end{abstract}

\section*{\bf Introduction}

For a group $G$ we denote by $\gamma_n G$ its lower central series and by $\widehat{G}$ the pronilpotent completion
$$\widehat G=\varprojlim\ G/\gamma_n G.
$$
It is well-known that the pronilpotent completion of the fundamental group is an invariant of $\ZZ$-homological equivalence, i.e. if $X\to Y$ is a $\ZZ$-homological equivalence of spaces, then $ \widehat{\pi_1X} \to \widehat{ \pi_1Y}$ is an isomorphism (see \cite[Th.5.1]{Stallings}). In this paper we present a  functor from the category of groups to itself $G\mapsto \ZZ_\infty G,$ that we call the {\it right exact $\ZZ$-completion}, which gives a stronger invariant of homological equivalence than the pronilpotent completion. Moreover, we give examples of $3$-manifolds $X,Y$, such that $\widehat{\pi_1 X}\cong  \widehat{\pi_1 Y}$ but $\ZZ_\infty \pi_1X \not\cong \ZZ_\infty \pi_1 Y.$

If $X$ is a space we denote by $\ZZ_\infty X$ its Bousfield-Kan $\ZZ$-completion \cite{BousfieldKan}. By definition we set
$$\ZZ_\infty G:=\pi_1(\ZZ_\infty BG),$$
where $BG$ is the classifying space. The homotopy type of the Bousfield-Kan $\ZZ$-completion $\ZZ_\infty X$ is invariant with respect to $\ZZ$-homology equivalence i.e. if $X\to Y$ is a $\ZZ$-homology equivalence, then $\ZZ_\infty X\to \ZZ_\infty Y$ is a homotopy equivalence  \cite{BousfieldKan}. Therefore $\pi_1(\ZZ_\infty X )$ is invariant with respect to $\ZZ$-homological equivalence. We prove the following (see Proposition \ref{prop_Z-inf-X})

\

\noindent {\bf Proposition.} {\it Let $X$ be a path-connected pointed space. Then there is a natural isomorphism
$$ \pi_1(\ZZ_\infty X )\cong \ZZ_\infty (\pi_1X).$$}

\noindent In particular, we obtain  that $\ZZ_\infty \pi_1(X)$ is an invariant of $\ZZ$-homological equivalence.

For a group $G$ we denote by $\BB_n G$ its $n$-th Baer invariant (see \cite{BurnsEllis97}, \cite{MacDonald},  \cite{Ellis} or Section \ref{sec_baer} for the definition of Baer invariants).
 We prove that there is a short exact sequence which describes  the difference between $\ZZ_\infty G$ and $\widehat G:$
$$1 \longrightarrow {\varprojlim}^1\: \BB_n G \longrightarrow \ZZ_\infty G \longrightarrow  \widehat G \longrightarrow 1.$$
  If the first homology of the group is finitely generated, we have even better interrelation between $\ZZ_\infty G$ and $\widehat G$ (see Proposition \ref{prop_gamma_omega}):

\

\noindent {\bf Proposition.} {\it If $G$ is a group with finitely generated $H_1G,$ then there are natural isomorphisms
$$\gamma_\omega(\ZZ_\infty G)\cong {\varprojlim}^1\BB_n G, \hspace{1cm} \ZZ_\infty G/\gamma_\omega (\ZZ_\infty G) \cong \widehat G. $$}

\noindent This proposition implies that for groups with finitely generated $H_1G$ the group $\widehat G$ can be reconstructed from the group $\ZZ_\infty G.$ In other words, the right exact $\ZZ$-completion carries more information about $G$ than the usual pronilpotent completion.

The main tool for computations of $\ZZ_\infty G$ is the following (see Corollary \ref{prop_resoluion}):

\

\noindent {\bf Proposition.} {\it If $U\mono E \epi G$ is a short exact sequence of groups such that $H_1E$ is finitely generated and $H_2 E$ is finite, then there is an exact sequence
$$\widehat U_E \longrightarrow \widehat E \longrightarrow \ZZ_\infty G \longrightarrow 1.$$}

We construct examples of $3$-manifolds that can't be distinguished by $ \widehat\pi_1$ but can be distinguished by $\ZZ_\infty \pi_1.$ Consider the following matrix $a_k= \left( \begin{smallmatrix} -1 & k \\
0 & -1 \end{smallmatrix} \right),$ where $k$ is an {\it odd} integer. The matrix defines the following semidirect product
$$G_k=\ZZ\ltimes_{a_k} \ZZ^2,$$
where $\ZZ$ acts on $\ZZ^2$ by $a_k.$  The construction of mapping torus  associated with the homeomorphism on $(S^1)^2$ induced by $a_k$ gives an aspherical $3$-manifold $X_k=K(G_k,1)$ with a fiber sequence
$$(S^1)^2 \to X_k \epi S^1$$
and the monodromy action induced by $a_k.$ The following result (see Theorem \ref{th_manifolds}) illustrates the value of the introduced invariants. 

\

\noindent{\bf Theorem.}\ {\it  Let $k$ and $l$ be two odd integers. Then the following holds.
\begin{enumerate}
\item $
\widehat{ \pi_1(X_k)} \cong \widehat{ \pi_1(X_l)}.$
\item If $kl \not\equiv 1,7\ ({\rm mod}\ 8),$ then $  \ZZ_\infty  \pi_1(X_k)\not \cong  \ZZ_\infty \pi_1(X_l).$
\end{enumerate}}

\

\noindent The condition $kl \not \equiv 1,7 \: ({\rm mod}\ 8)$ is connected with the fact that a $2$-adic integer $\alpha\in \ZZ_2$ is a square if and only if $\alpha \equiv 1\: ({\rm mod}\ 8).$ We use the group $\ZZ_\infty \pi_1$ in order to distinguish $3$-manifolds. However, the advantage of this invariant is that it can be used for arbitrary spaces.

In this paper we use the notion of right exact functor in the sense of Keune (see \cite{Keune}, \cite{AkhtiamovIvanovPavutnitskiy} and Section \ref{sec_right_exact}) and prove that, in some sense, $G\mapsto \ZZ_\infty G$ is the ``right exact version'' of the usual functor of pronilpotent completion $G\mapsto \widehat G$. That is why we call $\ZZ_\infty G$ the {\it right exact} $\ZZ$-completion of $G$.

The main motivation of this research is the old problem of J. Milnor, of finding a realizable transfinite version of $\mu$-invariants for links \cite{Milnor}. The pronilpotent completion of the link group provides a natural concordance invariant. How can one find a transfinite analog of this invariant, which can distinguish links with the same group completion up to concordance? We are not able to construct such an invariant for links, but one can extend the problem to the class of all spaces (or 3-manifolds) and change concordance by homology equivalence (or homology cobordism). The right exact $\ZZ$-completion of the fundamental group gives an answer on this extended problem.

Recall that, the original question asked by Milnor \cite{Milnor}, was about extracting invariants of links
from the quotient of the link group by the intersection of lower central series, i.e. by $\gamma_\omega$. In general,
the group $\mathbb Z_\infty \pi_1(X)$ is not an invariant of $\pi_1(X)/\gamma_\omega(\pi_1(X))$. However, if we restrict the class of
spaces $X$ to torus bundles, it will be. The reason is that, for a finitely generated metabelian group,
the lower central series stabilize on $\leq \omega$ step, in particular, the natural map $H_2(-)\to H_2(-/\gamma_\omega)$ is
an epimorphism. This implies that, for a torus bundle $X$, $\mathbb Z_\infty \pi_1(X)\cong \mathbb Z_\infty(\pi_1(X)/\gamma_\omega(\pi_1(X)))$. In
particular, $\mathbb Z_\infty \pi_1$ provides a solution of the original Milnor problem for the class of torus bundles.

Also we have to mention that the paper \cite{ChaOrr2013} played a significant role in developing transfinite methods in homology cobordism of 3-manifolds. In a forthcoming paper \cite{ChaOrrPrep}, Jae Choon Cha and Kent E. Orr have developed a full theory of Milnor invariants for 3-manifolds. Their homology cobordism invariants, indexed on finite and infinite ordinals, include Milnor’s classical link invariants as a special case, arising as invariants of 0-surgery on a link. As one corollary of their theory, they classify all possible transfinite lower central series quotients of the Vogel localized groups of oriented, closed 3-manifolds. Some of their examples  use the same twisted torus bundles. 

The authors thank the referee for the very detailed report and valuable comments about simplification of proofs.

\section{\bf Right exact functors in the sense of Keune}\label{sec_right_exact}

In this section we present a result of \cite{AkhtiamovIvanovPavutnitskiy} about right exact functors on the category of groups. Namely, we give several equivalent descriptions of right exact functors on the category of groups.

Let $\mathcal C$ be a category. A couple of epimorphisms $\alpha_0,\alpha_1: c' \rightrightarrows c $ is called {\it split couple of epimorphisms} if there exists a map $s:c'\to c$ such that $\alpha_0s ={\sf id}_c = \alpha_1 s.$ In other words, $s$ is a splitting for both epimorphisms $\alpha_0,\alpha_1$ at the same time. A functor
$$\Phi: \mathcal C \longrightarrow \mathcal D $$
is called {\it right exact} (in the  sense of Keune) if, for any split couple of morphisms $\alpha_0,\alpha_1:c'\rightrightarrows  c,$ the couple $(\alpha_0,\alpha_1)$ has a coequalizer in $\CC,$ the couple $(\Phi \alpha_0,\Phi \alpha_1) $ has a coequalizer in $\DD$ and the natural morphism
$${\rm coeq}(\Phi \alpha_0, \Phi \alpha_1) \to  \Phi({\rm coeq} (\alpha_0,\alpha_1 ))$$
is an isomorphism.

\begin{Remark} Some authors call a functor right exact if it commutes with finite colimits. We do not use this notion. By a right exact functor we always mean a right exact functor in this weaker sense of Keune.
\end{Remark}

Following Kan \cite{Kan}, \cite{DwyerKan}, we say that a simplicial group $F_\bullet$ is free if all groups $F_n$ are free and it is possible to chose bases of these groups so that they are stable under degeneracy maps. A free simplicial resolution is a weak equivalence of simplicial groups $F_\bullet \to G ,$ where $G$ is considered as a constant simplicial group and $F_\bullet$ is a free simplicial group. The derived functor of a functor $\Phi:{\sf Gr}\to {\sf Gr}$ is defined as follows ${\sf L}_n\Phi(G):=\pi_n(\Phi(F_\bullet)),$ where $F_\bullet \overset{\sim}\epi G$ is a free simplicial  resolution. Note that there is a natural map from the zero derived functor to the functor itself
$${\sf L}_0\Phi\to \Phi.$$

Let $G$ be a group. We say that $U$ is a $G$-group if $U$ is a group together with  a right action of $G$ on $U$ by automorphisms. In this case we can consider the semidirect product $G\ltimes U.$ Morphisms of $G$-groups are homomorphisms preserving the action of $G.$ The category of $G$-groups is denoted by $G$-${\sf Gr}$.
A normal subgroup $U$ of $G$ will be always considered as a $G$-group with the conjugation action.

Let $\Phi:{\sf Gr}\to {\sf Gr}$ be a functor. For a group $G$ we consider a functor from the category of $G$-groups to the category of $\Phi G$-groups
$$ \Phi_G :G\text{-}{\sf Gr} \longrightarrow \Phi G\text{-}{\sf Gr} $$
given by
$$\Phi_GU:={\sf Ker}(\Phi(G\ltimes U) \to \Phi G).$$
The action of $\Phi G$ on $\Phi_G U$ goes via the map $\Phi G\to \Phi(G\ltimes U).$ Note that there is an isomorphism
$$\Phi(G\ltimes U)=\Phi G\ltimes \Phi_GU.$$

Akhtiamov, Ivanov and Pavutnitskiy proved the following statement.

\begin{Theorem}[{\cite[Th.1.5]{AkhtiamovIvanovPavutnitskiy}}] \label{th_right_exact} The following statements about a functor $\Phi:{\sf Gr}\to {\sf Gr}$ are equivalent.

\begin{enumerate}
\item $\Phi$ is right exact.
\item The natural map ${\sf L}_0\Phi \to \Phi$ is an isomorphism.
\item  For a short exact sequence $U\mono G \epi H$ the sequence
$$ \Phi_G U \longrightarrow \Phi G \longrightarrow \Phi H \longrightarrow 1 $$
is exact, where the map $\Phi_G U \to \Phi G $ is induced by the map $G\ltimes U\to G, (g,u)\mapsto gu.$

\item For a simplicial group $G_\bullet$ the natural map $\pi_0( \Phi G_\bullet) \to \Phi ( \pi_0(G_\bullet))$ is an isomorphism.
\end{enumerate}
\end{Theorem}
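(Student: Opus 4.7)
The plan is to verify the cycle $(1)\Rightarrow(4)\Rightarrow(2)\Rightarrow(3)\Rightarrow(1)$. The unifying idea is a dictionary between split couples with common section and the $0$- and $1$-levels of a simplicial group (with $s=s_0$), combined with the dictionary between short exact sequences $U\mono G\epi H$ and the particular split couple $(\pi,m):G\ltimes U\rightrightarrows G$ given by $\pi(g,u)=g$ and $m(g,u)=gu$, both split by $g\mapsto(g,1)$, whose coequalizer is $H$. These two correspondences drive all four implications.

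For $(1)\Rightarrow(4)$, I apply right exactness of $\Phi$ to the split couple $(d_0,d_1):G_1\rightrightarrows G_0$ in an arbitrary simplicial group $G_\bullet$ (common section $s_0$, coequalizer $\pi_0(G_\bullet)$ by definition). For $(4)\Rightarrow(2)$, I pick any free simplicial resolution $F_\bullet\overset{\sim}{\epi}G$ and observe that ${\sf L}_0\Phi(G)=\pi_0(\Phi F_\bullet)\cong \Phi(\pi_0 F_\bullet)=\Phi G$ by (4), with the map being the canonical one.

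For $(2)\Rightarrow(3)$, given $U\mono G\epi H$, I build a free simplicial resolution of $H$ by placing the truncated simplicial group $(G\ltimes U\rightrightarrows G)$ at levels $0$ and $1$ (extended via the common section $g\mapsto(g,1)$), and then filling in higher levels using free simplicial resolutions of $G$ and $U$. Applying (2) identifies ${\sf L}_0\Phi H$ with $\Phi H$, while the semidirect product identity $\Phi(G\ltimes U)=\Phi G\ltimes\Phi_GU$ turns the coequalizer computation of $\pi_0\Phi F_\bullet$ at the bottom two levels into $\Phi G/{\sf im}(\Phi_GU\to\Phi G)$, which is precisely the required exactness.

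For $(3)\Rightarrow(1)$, let $(\alpha_0,\alpha_1):c'\rightrightarrows c$ be split by $s$, set $U=\ker\alpha_1$ so that $c'\cong U\rtimes c$, and note that the coequalizer is $c/N$ where $N$ is the normal closure of $\alpha_0(U)\subseteq c$. Applying (3) to the short exact sequence $N\mono c\epi c/N$ yields exactness of $\Phi_cN\to\Phi c\to\Phi(c/N)\to 1$. The step I expect to be the main obstacle is the remaining identification ${\sf coeq}(\Phi\alpha_0,\Phi\alpha_1)\cong \Phi(c/N)$: one must show that the image of $\Phi_cN\to\Phi c$ coincides with the normal closure in $\Phi c$ of the image of $\Phi_cU\to\Phi c$ induced by $\alpha_0$. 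This reduces, via the naturality of $\Phi_c(-)$ on the $c$-equivariant morphism $\alpha_0|_U:U\to N$ together with the identity $\Phi(c\ltimes V)=\Phi c\ltimes\Phi_cV$, to a bookkeeping argument about how $\Phi$ interacts with normal closures — the sole place where the proof requires genuinely new input beyond the two dictionaries above.
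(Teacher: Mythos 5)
The paper itself offers no proof of this statement---it is imported wholesale from \cite[Th.~1.5]{AkhtiamovIvanovPavutnitskiy}---so your argument has to stand on its own. Your two dictionaries are the right skeleton, and the implications $(1)\Rightarrow(4)$ (apply right exactness to the split couple $d_0,d_1:G_1\rightrightarrows G_0$ with common section $s_0$, whose coequalizer is $\pi_0 G_\bullet$) and $(4)\Rightarrow(2)$ are complete. The step $(2)\Rightarrow(3)$, however, is not well formed as written: a free simplicial resolution must consist of \emph{free} groups in every degree, with bases stable under degeneracies, so you cannot place $G\ltimes U\rightrightarrows G$ at levels $0$ and $1$---these groups are not free, and ${\sf L}_0\Phi H$ is computed only from honest free resolutions. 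If you replace your object by a genuine free resolution $F_\bullet\overset{\sim}\epi H$, then $\pi_0(\Phi F_\bullet)={\rm coeq}(\Phi d_0,\Phi d_1)$ involves only the free groups $F_0,F_1$, and relating this coequalizer to $\Phi G/\mathrm{im}(\Phi_GU\to\Phi G)$ is precisely the comparison the theorem is meant to establish; it does not follow from ``filling in higher levels.'' This is where essentially all of the work in \cite{AkhtiamovIvanovPavutnitskiy} is concentrated (one shows that ${\sf L}_0\Psi$ satisfies (3) for an \emph{arbitrary} functor $\Psi$ by constructing compatible free simplicial resolutions of $U$, $G\ltimes U$, $G$ and $H$), and your sketch supplies none of it.

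In $(3)\Rightarrow(1)$ you correctly reduce to the identification $\mathrm{im}(\Phi_cU\to\Phi c)=\mathrm{im}(\Phi_cN\to\Phi c)$ and then leave it open. It is not mere ``bookkeeping about normal closures'': the missing input is a \emph{second} application of hypothesis (3). The restriction $\beta=\alpha_0|_U:U\epi N$ is a surjection of $c$-groups, so $1\ltimes\beta:c\ltimes U\to c\ltimes N$ is a surjective homomorphism; applying (3) to $\ker(1\ltimes\beta)\mono c\ltimes U\epi c\ltimes N$ shows that $\Phi(c\ltimes U)\to\Phi(c\ltimes N)$ is onto, and since this map commutes with the projections to $\Phi c$ it restricts to a surjection $\Phi_cU\epi\Phi_cN$; because the composite $\Phi_cU\to\Phi_cN\to\Phi c$ agrees with your map $\Phi_cU\to\Phi c$, the two images coincide. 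With that inserted, $(3)\Rightarrow(1)$ closes, but $(2)\Rightarrow(3)$ remains an essential unproved step, so the cycle is not complete.
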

\begin{Remark}\label{rem_l0} Note that for any functor $\Phi$ the functor ${\sf L}_0\Phi$ is right exact. Indeed, the functors $\Phi$ and ${\sf L}_0\Phi$ are equal on free groups, and hence ${\sf L}_0({\sf L_0}\Phi)\cong {\sf L}_0\Phi.$
\end{Remark}

\section{\bf Baer invariants of groups}\label{sec_baer}

It is known that integral homology of a group $H_*G=H_*(G,\ZZ)$ are derived functors of the functor of abelianization \cite[Ch. II. \S 5]{Quillen}. More precisely,  if we take a free simplicial resolution $F_\bullet \overset{\sim}\epi G$ then
$$H_{n+1}G=\pi_n((F_\bullet)_{\sf ab}).$$
There is another sequence of functors starting from $H_2G$ which can be defined in the language of derived functors.

\begin{Definition}[relative lower central series] For a group $G$ and its normal subgroup $U$ we define the $n$-th term of its lower central series $\gamma_n(U,G)$ by recursion
$$\gamma_{n+1}(U,G)=[\gamma_n(U,G),G],$$
where $\gamma_1(U,G)=U.$ Then the usual lower central series is defined as $\gamma_n G=\gamma_n(G,G).$ Moreover, we denote by  $\nu_n G$  the  quotient  $$\nu_n G=G/\gamma_{n+1}G.$$ Then $\nu_n G$ is called the $n$-nilpotenization of $G.$
\end{Definition}

\begin{Definition}[Baer invariant]\label{def_Baer}
  We define the Baer invariant $\BB_n$ as the first derived functor of the functor of $n$-nilpotenization:
$$\BB_n G=\pi_1(\nu_n F_\bullet).$$
Derived functors do not depend of the choice of the resolution, so $\BB_n G$ is an invariant of  $G.$ The first homotopy group of a simplicial group is abelian (because any simplicial group is weak equivalent to the loop space of its classifying space \cite[Ch.V]{GoerssJardine}). Hence $\BB_nG$ is an abelian group. Note that $\BB_1 G\cong H_2 G.$ If we present the group $G$ as a quotient of a free group $G=F/R,$ then there is the following version of Hopf's formula for the Baer invariant
$$\BB_n G =\frac{R \cap \gamma_{n+1}F }{\gamma_{n+1}(R,F)}, $$
where $\gamma_{n+1}(R,F)=[ \gamma_n(R,F),F ]$ and $\gamma_1(R,F)=R$ (see \cite{BurnsEllis97}, \cite{MacDonald},  \cite{Ellis}). Baer invariant $M_nG$ is also known as $n$-nilpotent multiplier which is the reason for the notation. Ellis proved the following theorem about Baer invariants.
\end{Definition}

\begin{Theorem}[{\cite[Th. 2]{Ellis}, \cite[Th. 1.74]{MikhailovPassiBook}}]\label{th_Ellis}
If $H_2G$ is a torsion group, then $\BB_nG$ is a torsion group for any $n.$
\end{Theorem}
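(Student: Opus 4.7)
Induct on $n$; the base case $n=1$ is immediate from $\BB_1 G\cong H_2 G$. For the inductive step, fix a free simplicial resolution $F_\bullet\overset{\sim}\epi G$, so that $\BB_k G=\pi_1(\nu_k F_\bullet)$. The termwise central short exact sequence of simplicial groups
\begin{equation*}
1\longrightarrow \gamma_n F_\bullet/\gamma_{n+1}F_\bullet\longrightarrow \nu_n F_\bullet\longrightarrow \nu_{n-1}F_\bullet\longrightarrow 1
\end{equation*}
induces a long exact sequence in homotopy, a segment of which reads
\begin{equation*}
\pi_1(\gamma_n F_\bullet/\gamma_{n+1}F_\bullet)\longrightarrow \BB_n G\longrightarrow \BB_{n-1} G.
\end{equation*}
Hence $\BB_n G$ is an extension of a subgroup of $\BB_{n-1}G$ (torsion by induction) by a quotient of $\pi_1(\gamma_n F_\bullet/\gamma_{n+1}F_\bullet)$, and it is enough to show that the latter group is torsion.

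Since each $F_k$ is free, $\gamma_n F_k/\gamma_{n+1}F_k\cong L_n((F_k)_{\sf ab})$ by Magnus--Witt, where $L_n$ denotes the degree-$n$ component of the free Lie ring functor. Over $\mathbb{Q}$, the Dynkin idempotent (equivalently PBW/Schur--Weyl) provides a natural direct-summand inclusion $L_n(A)\otimes\mathbb{Q}\hookrightarrow A^{\otimes n}\otimes\mathbb{Q}$ for every free abelian group $A$. Applied termwise, this splits $L_n((F_\bullet)_{\sf ab})\otimes\mathbb{Q}$ off $((F_\bullet)_{\sf ab})^{\otimes n}\otimes\mathbb{Q}$ as simplicial $\mathbb{Q}$-vector spaces. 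In particular, $\pi_1(L_n((F_\bullet)_{\sf ab}))\otimes\mathbb{Q}$ is a direct summand of $\pi_1(((F_\bullet)_{\sf ab})^{\otimes n})\otimes\mathbb{Q}$.

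Using $\pi_k((F_\bullet)_{\sf ab})\cong H_{k+1}G$ and the K\"unneth formula over $\mathbb{Q}$,
\begin{equation*}
\pi_1\bigl(((F_\bullet)_{\sf ab})^{\otimes n}\bigr)\otimes\mathbb{Q}=\bigoplus_{j=1}^n H_1(G;\mathbb{Q})^{\otimes(j-1)}\otimes H_2(G;\mathbb{Q})\otimes H_1(G;\mathbb{Q})^{\otimes(n-j)}.
\end{equation*}
Since $H_2 G$ is torsion, $H_2(G;\mathbb{Q})=0$, so the right-hand side vanishes. Therefore $\pi_1(L_n((F_\bullet)_{\sf ab}))$ is torsion, closing the induction.

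\textbf{Main obstacle.} The non-formal inputs are the Magnus--Witt identification $\gamma_n F/\gamma_{n+1}F\cong L_n(F_{\sf ab})$ for free $F$, together with the naturality in $A$ of the rational PBW splitting of $A^{\otimes n}$; once these are granted, the claim reduces to a routine chase through the simplicial long exact sequence and the rational K\"unneth formula. Ellis' original proof instead works at the group-theoretic level, manipulating the Hopf formula $\BB_n G=(R\cap\gamma_{n+1}F)/\gamma_{n+1}(R,F)$ via explicit commutator calculus and iterated five-term sequences, which avoids simplicial methods at the cost of a more intricate bookkeeping of torsion.
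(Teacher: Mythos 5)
Your argument is correct. Note first that the paper does not actually prove Theorem \ref{th_Ellis} --- it is cited from Ellis and from Mikhailov--Passi --- so the natural point of comparison is the proof of the companion Proposition \ref{prop_f.p._baer}, which the authors say follows Ellis's strategy. Your inductive skeleton is identical to that one: the termwise short exact sequence $\gamma_n F_\bullet/\gamma_{n+1}F_\bullet \mono \nu_n F_\bullet \epi \nu_{n-1}F_\bullet$, the resulting exact sequence on $\pi_1$, and the Magnus--Witt identification $\gamma_n F/\gamma_{n+1}F\cong L^n(F_{\sf ab})$ reducing everything to $\pi_1(L^n((F_\bullet)_{\sf ab}))$. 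Where you diverge is the final step. The paper (for the finitely generated statement) replaces $(F_\bullet)_{\sf ab}$ by a homotopy equivalent free simplicial abelian group with finitely generated terms in degrees $0$ and $1$ and concludes by functoriality of $L^n$; you instead tensor with $\mathbb{Q}$, use the naturality of the Dynkin--Specht--Wever retraction to exhibit $L^n(A_\bullet)\otimes\mathbb{Q}$ as a simplicial direct summand of $A_\bullet^{\otimes n}\otimes\mathbb{Q}$, and kill $\pi_1$ of the latter by Eilenberg--Zilber/K\"unneth since $H_2(G;\mathbb{Q})=0$. Both endings are sound; yours is arguably cleaner for the torsion statement (torsion is detected by $-\otimes\mathbb{Q}$, so the rational splitting does all the work), while the paper's replacement-complex argument is the one that adapts to finite generation, where tensoring with $\mathbb{Q}$ loses the relevant information. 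The only points you are implicitly relying on --- that a surjection of simplicial groups is a Kan fibration (giving the long exact sequence), that $\pi_*$ of a simplicial abelian group commutes with the flat functor $-\otimes\mathbb{Q}$, and that the Dynkin idempotent is natural --- are all standard and unproblematic.
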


The proof of the following proposition use the same strategy as the proof Ellis \cite[Th. 2]{Ellis}.

\begin{Proposition}\label{prop_f.p._baer} If $G$ is a group such that $H_1G$ and $H_2G$ are finitely generated, then $\BB_nG$ is finitely generated for any $n$.
\end{Proposition}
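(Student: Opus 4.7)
\emph{Proof plan.}
The plan is to exploit the Hopf-type formula $\BB_n G=(R\cap \gamma_{n+1}F)/\gamma_{n+1}(R,F)$ recorded in Definition~\ref{def_Baer} and mimic the inductive strategy of Ellis, this time tracking finite generation instead of torsion. Since $H_1G$ is finitely generated, I can pick a presentation $G\cong F/R$ with $F$ free of \emph{finite} rank; this is the only place where the hypothesis on $H_1G$ is used. The main goal is then to show that the (typically large) group $R/\gamma_{n+1}(R,F)$ is itself a finitely generated nilpotent group, from which the assertion on $\BB_n G$ follows at once.

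For the base case $n=1$, the five-term exact sequence of $1\to R\to F \to G\to 1$ together with $H_2F=0$ gives
$$0 \longrightarrow H_2 G \longrightarrow R/[R,F] \longrightarrow F_{\sf ab} \longrightarrow H_1 G \longrightarrow 0.$$
Both $F_{\sf ab}$ and $H_2G$ are finitely generated, hence $R/\gamma_2(R,F)=R/[R,F]$ is finitely generated too.

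For the induction step, assume $R/\gamma_n(R,F)$ is finitely generated for some $n\geq 2$. Because $\gamma_n(R,F)\supseteq \gamma_n R$, this group is nilpotent, hence polycyclic by P.~Hall's theorem, so every subgroup is finitely generated. In particular $\gamma_{n-1}(R,F)/\gamma_n(R,F)$ is finitely generated. Inside $F/\gamma_{n+1}(R,F)$ the subgroup $\gamma_n(R,F)/\gamma_{n+1}(R,F)$ is central, and the standard commutator identities then yield a surjective bilinear pairing
$$\bigl(\gamma_{n-1}(R,F)/\gamma_n(R,F)\bigr)\otimes_{\ZZ} F_{\sf ab} \twoheadrightarrow \gamma_n(R,F)/\gamma_{n+1}(R,F).$$
Because $F_{\sf ab}$ has finite rank, the left-hand side is finitely generated, and hence so is the right-hand side. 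Feeding this into the short exact sequence
$$1 \longrightarrow \gamma_n(R,F)/\gamma_{n+1}(R,F) \longrightarrow R/\gamma_{n+1}(R,F) \longrightarrow R/\gamma_n(R,F) \longrightarrow 1$$
shows that $R/\gamma_{n+1}(R,F)$ is finitely generated, closing the induction. The desired conclusion then drops out: $\BB_n G$ is a subgroup of the finitely generated nilpotent group $R/\gamma_{n+1}(R,F)$, hence finitely generated by P.~Hall once more, and being abelian it is a finitely generated abelian group.

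The main technical point is verifying that the commutator pairing displayed above is well defined and bilinear modulo $\gamma_{n+1}(R,F)$. The biadditivity identities $[xy,f]\equiv [x,f][y,f]$ and $[x,fg]\equiv [x,f][x,g]$ follow painlessly from centrality of $\gamma_n(R,F)/\gamma_{n+1}(R,F)$, and the factorization through $F_{\sf ab}$ amounts to $[x,[f,g]]\equiv 1 \pmod{\gamma_{n+1}(R,F)}$, which is a short Hall--Witt computation. Once those routine commutator manipulations are in hand, the proof is essentially inductive bookkeeping in the same spirit as Ellis's torsion argument.
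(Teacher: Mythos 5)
There is a genuine gap at the very first step: the claim that, because $H_1G$ is finitely generated, one may choose a presentation $G\cong F/R$ with $F$ free of \emph{finite} rank. That requires $G$ itself to be finitely generated, and finite generation of $H_1G=G_{\sf ab}$ does not imply finite generation of $G$ (for instance, there exist acyclic groups --- so $H_1=H_2=0$ --- that are not finitely generated, such as infinitely generated mitotic/binate groups). This is not a removable convenience in your argument: the base case uses that $F_{\sf ab}$ is finitely generated, and the inductive step uses it again via the surjection from $\bigl(\gamma_{n-1}(R,F)/\gamma_n(R,F)\bigr)\otimes_{\ZZ}F_{\sf ab}$, so the whole induction collapses if $F$ has infinite rank. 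Moreover the paper genuinely needs the proposition for non--finitely generated groups: in Proposition \ref{prop_gamma_omega} it is applied (through Corollary \ref{cor_finpres} and Proposition \ref{prop_f.p.H2}) to the $H\ZZ$-localization $LF$ of a finitely generated free group, which has $H_1$ finitely generated and $H_2=0$ but is uncountable, hence not finitely generated. So your argument proves a strictly weaker statement (the case of finitely generated $G$) that does not cover the paper's applications.

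Everything after that first sentence is essentially correct for finitely generated $G$: the five-term sequence gives the base case, the centrality of $\gamma_n(R,F)/\gamma_{n+1}(R,F)$ and the three-subgroups lemma give the bilinear surjection onto it, and polycyclicity of finitely generated nilpotent groups finishes the induction. The paper avoids the problem entirely by working with a free simplicial resolution $F_\bullet\epi G$: it reduces to $\pi_1(L^n((F_\bullet)_{\sf ab}))$ and then uses the Dold--Kan correspondence to replace $(F_\bullet)_{\sf ab}$, up to homotopy, by a free simplicial abelian group that is finitely generated in degrees $0$ and $1$ --- which uses only the hypotheses on $H_1G$ and $H_2G$, not finite generation of $G$. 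To repair your approach you would need an analogous reduction (e.g., showing the Baer invariants of $G$ agree with those of a suitable finitely generated replacement), which is not supplied and is not obvious, since Baer invariants are not preserved by $2$-connected maps in general ($\BB_1=H_2$ already fails).
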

\begin{proof}
Let $F_\bullet\overset{\sim}\epi G$ be a free simplicial resolution. The short exact sequence $(\gamma_n F_\bullet)/(\gamma_{n+1} F_\bullet) \mono \nu_n F_\bullet\epi \nu_{n-1} F_\bullet$ gives an exact sequence
$$\pi_1( \gamma_n F_\bullet/\gamma_{n+1} F_\bullet) \longrightarrow \pi_1( \nu_n F_\bullet ) \longrightarrow \pi_1( \nu_{n-1} F_\bullet ).$$ So, in order to prove the statement by induction, it is enough to prove that $\pi_1( (\gamma_n F_\bullet)/(\gamma_{n+1} F_\bullet))$ is finitely generated. It is known that for a free group $F$ there is a natural isomorphism $\gamma_n F/\gamma_{n+1} F\cong L^n(F_{ab}),$ where $L^n$ is the functor of $n$-th Lie power  (freely generated Lie algebra submodule of degree $n$ homogeneous elements) \cite{MagnusKarrasSolitar}. Therefore $\pi_1(\gamma_n F_\bullet/\gamma_{n+1} F_\bullet)\cong \pi_1(L^n((F_\bullet)_{ab} )).$ Any non-negatively graded chain complex $C_\bullet$ of free abelian groups is quasi-isomorphic (isomorphic in the derived category) to the chain complex of its homology groups $H_*(C_\bullet)$ with trivial differentials. For the chain complex $H_*(C_\bullet)$ with trivial differentials, we can take a resolution for each of its terms, forming a bicomplex with trivial horizontal differentials, take the total complex \cite[Ch. IV, \S 4]{CartanEilenberg} and obtain a new chain complex $D_\bullet$, which is homotopy equivalent to $C_\bullet.$   Then a non-negatively graded  chain complex $C_\bullet$ of free abelian groups with finitely generated $H_0(C_\bullet)$ and $H_1(C_\bullet)$ is homotopy equivalent to a chain complex of free abelian groups $D_\bullet$ with finitely generated $D_0$ and $D_1.$ By the Dold-Kan correspondence it follows that $(F_\bullet)_{ab}$ is homotopy equivalent to a free simplicial abelian group $A_\bullet$ with finitely generated $A_0$ and $A_1.$ Therefore $\pi_1( L^n((F_\bullet)_{ab}))\cong \pi_1( L^n(A_\bullet))$ is finitely generated.
\end{proof}

\begin{Corollary}\label{cor_finpres}
If $G$ is  a group such that $H_1G$ is finitely generated and $H_2G$ is finite, then $\BB_n G$ is finite for any $n.$
\end{Corollary}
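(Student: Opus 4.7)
The plan is to combine the two results about Baer invariants that immediately precede this corollary. Observe that a finite abelian group is, in particular, both finitely generated and torsion, so the hypothesis ``$H_1G$ finitely generated and $H_2G$ finite'' is strictly stronger than each of the two hypotheses appearing above, and the conclusions of both results apply simultaneously.

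More concretely, I would first invoke Proposition \ref{prop_f.p._baer}: since $H_1G$ is finitely generated and $H_2G$ is finite (hence finitely generated), the group $\BB_n G$ is finitely generated for every $n$. Next, I would invoke Theorem \ref{th_Ellis} of Ellis: since $H_2G$ is finite, it is in particular a torsion group, so $\BB_n G$ is a torsion group for every $n$. Recall from Definition \ref{def_Baer} that $\BB_n G$ is an abelian group.

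A finitely generated torsion abelian group is finite (by the structure theorem, or directly: a finitely generated abelian group is a direct sum of cyclic groups, and the torsion condition rules out infinite cyclic summands, leaving only finitely many finite cyclic summands). Applying this to $\BB_n G$ gives the desired conclusion. There is no real obstacle here: the two ingredients have already been established and the combination is immediate, so the ``proof'' is essentially a one-line corollary.
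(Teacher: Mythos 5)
Your proof is correct and is exactly the paper's argument: combine Proposition \ref{prop_f.p._baer} (finitely generated) with Theorem \ref{th_Ellis} (torsion) and use that a finitely generated torsion abelian group is finite. The paper states this in one line; your write-up just spells out the same combination.
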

\begin{proof}
This follows from Proposition \ref{prop_f.p._baer} and Theorem \ref{th_Ellis}.
\end{proof}

\section{\bf Right exact $\ZZ$-completion of a group.}

In this section we intensively use the theory of Bousfield-Kan completion  \cite{BousfieldKan}. Let $R$ be either a subring of $\mathbb Q$ or $\ZZ/m.$ Then $R_\infty X$ denotes Bousfield-Kan  $R$-completion of a space $X.$ One of the main properties of the Bousfield-Kan $R$-completion $R_\infty X$ is that its homotopy type  is an  invariant of $R$-homological equivalence i.e. if $X\to Y$ is a $R$-homological equivalence, then $R_\infty X\to R_\infty Y $ is a homotopy equivalence. Moreover, there is the following variant of Whitehead's theorem which easily follows from a combination of results of Bousfield-Kan \cite{BousfieldKan} and Kan \cite{Kan76}.

\begin{Theorem}[{see  \cite{Kan76}}]\label{th_whitehead} Let $X\to Y$ be a map of connected pointed spaces such that $H_i(X,R)\to H_i(Y,R)$ is an isomorphism for $i<n$ and an epimorphism for $i=n.$ Then $\pi_i R_\infty X \to \pi_iR_\infty Y$ is an isomorphism for $i<n$ and an epimorphism for $i=n.$
\end{Theorem}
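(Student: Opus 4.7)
The plan is to reduce the theorem to two building blocks from the literature: Bousfield-Kan's theorem that $R_\infty$ converts $R$-homology equivalences into weak equivalences, and an $R$-local Hurewicz-type principle for the completion. First I would replace $f:X\to Y$ by a cofibration using the mapping cylinder and form the cofiber $C=Y/X$. The long exact sequence in $R$-homology of the pair, combined with the hypothesis on $H_i(f;R)$, immediately gives $\tilde H_i(C,R)=0$ for every $i\leq n$, so that $C$ is $R$-homologically $n$-acyclic.

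The key intermediate assertion is: if $Z$ is a connected pointed space with $\tilde H_i(Z,R)=0$ for $i\leq n$, then $R_\infty Z$ is $n$-connected. For simply connected (or nilpotent) $Z$ this is a standard consequence of the Bousfield-Kan spectral sequence, whose $E_2$-term is built from $R$-homology. The general non-nilpotent case is where Kan's paper \cite{Kan76} enters: $\pi_1 R_\infty Z$ is a delicate completion of $\pi_1 Z$---indeed, for $R=\ZZ$ it is essentially the right exact $\ZZ$-completion $\ZZ_\infty$ studied in the present paper---and only Kan's analysis controls $\pi_1 R_\infty Z$ in terms of low-dimensional $R$-homology in full generality.

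To pass from the assertion about $C$ to one about $f$, I would use the cofibration sequence $X\hookrightarrow Y\to C$ together with the preservation properties of $R_\infty$ in the relevant range. Concretely, the $n$-connectivity of $R_\infty C$ combined with a suitable connectivity-comparison argument (for instance via Blakers-Massey between the homotopy fiber and homotopy cofiber of $X\to Y$, or via attaching cells of dimension greater than $n$ to interpolate between $Y$ and an $R$-homology equivalent model of $X$) forces the homotopy fiber of $R_\infty X\to R_\infty Y$ to be $(n-1)$-connected, yielding the desired conclusion that $\pi_i R_\infty f$ is an isomorphism for $i<n$ and an epimorphism for $i=n$. The step I expect to be most delicate is the Hurewicz statement for $\pi_1$: without any nilpotence or simple connectivity hypothesis, the spectral sequence argument does not suffice, and Kan's paper is essential for controlling the low-dimensional behavior of $R_\infty$.
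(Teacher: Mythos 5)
Your reduction to the cofiber $C=Y/X$ is where the argument breaks down, and it breaks down precisely on the non-nilpotent generality that the theorem is meant to cover. Two steps fail. First, $R_\infty$ is defined by a tower of fibrations and does not interact with cofibration sequences: $R_\infty C$ is not the cofiber of $R_\infty X\to R_\infty Y$, so $n$-connectivity of $R_\infty C$ says nothing directly about $R_\infty f$. Second, and more fundamentally, the passage from cofiber connectivity to fiber connectivity (Blakers--Massey, relative Hurewicz, or ``attach cells of dimension $>n$'') requires connectivity or at least nilpotence hypotheses on $X$ and on the pair that are absent here: for an acyclic group $G$ the map $BG\to *$ has contractible cofiber, yet its fiber is $BG$ itself, and no cell-attachment model in dimensions $>n$ exists. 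Your proposed fix is to run Blakers--Massey after completing, but $R_\infty X$ and $R_\infty Y$ are still only connected, not simply connected, so the same obstruction recurs. Finally, your ``key intermediate assertion'' ($\tilde H_{\le n}(Z,R)=0\Rightarrow R_\infty Z$ is $n$-connected) in the non-nilpotent case is essentially the theorem itself for $Z\to *$, deferred to an unspecified result of Kan; it is not a reduction.

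The paper's actual proof avoids cofibers entirely. One factors $f$ in Bousfield's $H_*(-;R)$-localized model structure as $X\to Ef\overset{f'}{\to}Y$, where $X\to Ef$ is a cofibration and an $R$-homology equivalence and $f'$ is a fibration in that structure. Kan's Whitehead theorem \cite[Theorem 3.1]{Kan76} applies to the fibration $f'$ and yields the $\pi_i$-statement for $f'$ itself, \emph{before} completion; then \cite[Ch.~IV, Proposition 5.1]{BousfieldKan} transports that $\pi_i$-connectivity through $R_\infty$, while $R_\infty(X\to Ef)$ is a homotopy equivalence because $X\to Ef$ is an $R$-homology equivalence. If you want to salvage your outline, the missing idea is exactly this fibration-side factorization: the hard homology-to-homotopy step must be performed on a map that is fibrant in the localized model structure, which is what \cite{Kan76} controls, rather than on a cofiber.
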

\begin{proof}
Following Bousfield \cite{Bousfield75} and Kan \cite{Kan76} we consider a factorization $X\to Ef \overset{f'}\to Y$ of the map $f:X\to Y$ in which $X\to Ef$ is a cofibration and $R$-homological equivalence $H_*(X,R)\cong H_*(Ef,R)$ and $f'$ is a fibration in the model category where weak equivalences
are homology equivalences (Bousfield localization of a model category).  Kan's version of Whitehead theorem \cite[Theorem 3.1]{Kan76} implies that $\pi_if':\pi_iEf\to \pi_i Y $ is an isomorphism for $i< n$ and an epimorphism for $i=n.$ It follows that \cite[Ch. IV, Proposition 5.1]{BousfieldKan} the map $\pi_i R_\infty f' : \pi_i R_\infty Ef\to \pi_i R_\infty Y$ is an isomorphism for $i< n$ and an epimorphism for $i=n.$ Since $X\to Ef$ is an $R$-homological equivalence, we obtain that $R_\infty X \to R_\infty Ef$ is a homotopy equivalence. The assertion follows.
\end{proof}

For a group $G$ we define its pro-nilpotent completion as the inverse limit of its $n$-nilpotenizations
$$\widehat G=\varprojlim \ \nu_n G,$$
where $\nu_n G=G/\gamma_{n+1} G$ and $\gamma_n G$ is the lower central series.
The functor of pro-nilpotent completion is not right exact. However its zero derived functor is right exact (Remark \ref{rem_l0}).

\begin{Definition}
For a group $G$  we define its {\it right exact $R$-completion} $R_\infty G$ as the fundamental group of the Bousfield-Kan $R$-completion  \cite{{BousfieldKan}} of the classifying space:
$$R_\infty G: =\pi_1(R_\infty BG).$$
The map $BG\to R_\infty BG$ induces a map
$$G\to R_\infty G.$$
In this paper we will be interested only in the case $R=\ZZ.$
\end{Definition}

Further we use the homotopy theory of simplicial groups (\cite[Ch. VI]{May}, \cite[Ch. IV]{BousfieldKan}, \cite{GoerssJardine}). In particular, we use the following pair of adjoint functors:  Kan's loop functor $\mathcal G $ from the category of reduced simplicial sets to the category of simplicial groups and the functor of classifying space of a simplicial group $\overline{\mathcal W}:$
$$ \mathcal G: {\sf sSets}_{\sf red} \rightleftarrows {\sf sGr} : \overline{\mathcal{W}}.$$
We use the following interpretation of the  Bousfield-Kan $\ZZ$-completion of a reduced simplicial set $X_\bullet$
$$\ZZ_\infty X_\bullet \simeq \overline{\mathcal W}( \widehat{\mathcal G X_\bullet})$$
(see \cite[Ch. IV, Prop. 4.1]{BousfieldKan}).

\begin{Proposition}\label{prop_Z-inf_L0} The functor of right exact $\ZZ$-completion of groups is the zero derived functor of the functor of pro-nilpotent completion of groups
$$(G\mapsto \ZZ_\infty G) =  {\sf L}_0( G\mapsto \widehat G).$$
In particular, the functor $G\mapsto \ZZ_\infty G$ is right exact.  Moreover, for any group $G$ there is a natural short exact sequence
\begin{equation}\label{eq_exact_seq_lim1}
1 \longrightarrow
{\varprojlim}^1\:  \BB_n G
\longrightarrow
 \ZZ_\infty G \longrightarrow \widehat{G} \longrightarrow 1.
\end{equation}
\end{Proposition}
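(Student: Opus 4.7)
The plan is to interpret everything in terms of simplicial groups. Choose a reduced classifying simplicial set $X_\bullet$ for $G$ and set $F_\bullet := \mathcal G X_\bullet$. Then $F_\bullet$ is a free simplicial group, and since $X_\bullet\simeq K(G,1)$, one has $\pi_0 F_\bullet \cong G$ and $\pi_i F_\bullet = 0$ for $i\geq 1$, so $F_\bullet \overset{\sim}{\epi} G$ is a free simplicial resolution. Combining the Bousfield-Kan identification $\ZZ_\infty X_\bullet \simeq \overline{\mathcal W}(\widehat{F_\bullet})$ (with $\widehat{(-)}$ applied degreewise) with the standard shift $\pi_{n+1}\overline{\mathcal W}(H_\bullet)\cong \pi_n H_\bullet$ for any simplicial group $H_\bullet$ yields
$$\ZZ_\infty G = \pi_1(\ZZ_\infty X_\bullet) \cong \pi_0(\widehat{F_\bullet}).$$
By definition of the derived functor, the right-hand side equals ${\sf L}_0(\widehat{(-)})(G)$, which proves the first assertion. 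Right exactness of $\ZZ_\infty$ is then immediate from Remark \ref{rem_l0}.

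For the short exact sequence \eqref{eq_exact_seq_lim1}, rewrite $\widehat{F_\bullet} = \varprojlim_n \nu_n F_\bullet$ degreewise. The transition maps $\nu_{n+1}F_\bullet \epi \nu_n F_\bullet$ are degreewise surjective homomorphisms of simplicial groups, hence Kan fibrations, so the strict inverse limit coincides with the homotopy inverse limit and the Bousfield-Kan Milnor short exact sequence applies:
$$1 \longrightarrow {\varprojlim}^1\: \pi_1(\nu_n F_\bullet) \longrightarrow \pi_0(\widehat{F_\bullet}) \longrightarrow \varprojlim\: \pi_0(\nu_n F_\bullet) \longrightarrow 1.$$
Since the functor $\nu_n$ is plainly right exact on groups, Theorem \ref{th_right_exact}(4) gives $\pi_0(\nu_n F_\bullet) \cong \nu_n(\pi_0 F_\bullet) \cong \nu_n G$, whose inverse limit is $\widehat G$ by definition of the pronilpotent completion. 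By Definition \ref{def_Baer}, $\pi_1(\nu_n F_\bullet) \cong \BB_n G$. Substituting these identifications into the display above yields exactly the stated sequence.

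The main technical delicacy is justifying that the Milnor sequence is a short exact sequence \emph{of groups} at the level of $\pi_0$, rather than merely an exact sequence of pointed sets. This is fine because the whole tower lives in simplicial groups, so $\pi_0$ of the limit inherits a canonical group structure, and because the $\pi_1(\nu_n F_\bullet)\cong \BB_n G$ are abelian (as $\pi_1$ of any simplicial group is abelian, see Definition \ref{def_Baer}), so ${\varprojlim}^1\BB_n G$ is a well-defined abelian group sitting as a normal subgroup of $\pi_0(\widehat{F_\bullet})$. The remaining ingredients---right exactness of $\nu_n$ on groups and the Kan-fibration property of degreewise surjective maps of simplicial groups---are standard, so the argument reduces to a direct application of the simplicial-group formalism underlying Bousfield-Kan completion.
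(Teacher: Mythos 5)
Your proposal is correct and follows essentially the same route as the paper: identify $\ZZ_\infty G$ with $\pi_0(\widehat{F_\bullet})$ for the free simplicial resolution $F_\bullet=\mathcal G X_\bullet$ of $G$ via the Bousfield--Kan formula $\ZZ_\infty X_\bullet\simeq \overline{\mathcal W}(\widehat{\mathcal G X_\bullet})$, then apply Milnor's $\varprojlim^1$ sequence to the tower $\nu_n F_\bullet$ together with right exactness of $\nu_n$ and the definition of the Baer invariants. The extra care you take about fibrancy of the tower and the group structure on $\pi_0$ is sound and only makes explicit what the paper leaves implicit.
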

\begin{Remark}
The group ${\varprojlim}^1\:  \BB_n G$ is abelian. However, the extension \eqref{eq_exact_seq_lim1} is not necessarily central.
\end{Remark}
\begin{proof}[Proof of Proposition \ref{prop_Z-inf_L0}]
If we take $X_\bullet= \overline{\mathcal W} G,$ where $G$ is considered as a constant simplicial group, we obtain a simplicial classifying space of $G.$ Then $F_\bullet={\mathcal G}( \overline{\mathcal  W} G)\to G$ is a free simplicial resolution of $G,$
where the map $F_\bullet\to G$ is the counit of the adjunction $\mathcal G \dashv \overline{\mathcal{W}}$ \cite[Lemma 5.3]{GoerssJardine}.  Hence
$$\ZZ_\infty G= \pi_1(\ZZ_\infty BG)=\pi_1( \overline{\mathcal W} \widehat F_\bullet )=\pi_0(\widehat F_\bullet).$$
Therefore $G\mapsto \ZZ_\infty G$ is the zero derived functor of the functor of pro-nilpotent completion.  Milnor's short exact sequence has the following form here
$$1\longrightarrow {\varprojlim}^1\:\pi_1(\nu_n F_\bullet) \longrightarrow \pi_0( \varprojlim\  \nu_n F_\bullet ) \longrightarrow \varprojlim\ \pi_0(\nu_n F_\bullet)  \longrightarrow 1.$$
The functor $\nu_n$ is right exact (Theorem \ref{th_right_exact}), and hence, $\pi_0( \nu_n F_\bullet )\cong  \nu_n G.$ By definition $\pi_1(\nu_n F_\bullet)=M_nG$ (Definition \ref{def_Baer}) which gives the required short exact sequence.
\end{proof}

\begin{Proposition} \label{prop_Z-inf-X}
 Let $X$ be a path-connected pointed space. Then
$$\pi_1(\ZZ_\infty X)\cong  \ZZ_\infty (\pi_1(X)).$$
\end{Proposition}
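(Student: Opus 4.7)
The plan is to apply the Bousfield--Kan version of Whitehead's theorem (Theorem~\ref{th_whitehead}) to a natural classifying map $f: X \to B\pi_1(X)$. Such a map can be constructed, for example, by choosing a model for $B\pi_1(X)$ (say via the nerve of $\pi_1(X)$, or as $\overline{\mathcal W}\pi_1(X)$) and using the universal property of $K(\pi,1)$'s; the result is a map that fits into a fibration sequence $\widetilde X \to X \to B\pi_1(X)$ with $\widetilde X$ the universal cover of $X$. Once we show $f$ induces an isomorphism on $H_0, H_1$ and a surjection on $H_2$ with $\ZZ$ coefficients, Theorem~\ref{th_whitehead} applied with $n=2$ and $R=\ZZ$ gives that $\pi_1 \ZZ_\infty f$ is an isomorphism, and by definition $\pi_1(\ZZ_\infty B\pi_1(X)) = \ZZ_\infty \pi_1(X)$.

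First I would verify the homological hypotheses on $f$. The map on $H_0$ is an isomorphism because both spaces are path-connected. The map on $H_1(-,\ZZ)$ is an isomorphism since both groups are canonically identified with $\pi_1(X)^{\sf ab}$. For the surjection on $H_2$, I would use the Serre spectral sequence of the fibration $\widetilde X \to X \to B\pi_1(X)$: the universal cover $\widetilde X$ is simply connected, so $H_1(\widetilde X) = 0$, hence $E^2_{0,1} = 0$; the only outgoing differentials from $E^r_{2,0}$ for $r \geq 2$ therefore land in groups that vanish for dimensional reasons, so $E^\infty_{2,0} = E^2_{2,0} = H_2(B\pi_1(X))$, and this is a subquotient (in fact a quotient) of $H_2(X)$ by the edge-map filtration, i.e.\ $H_2(f)$ is surjective. (Equivalently, this is the standard 5-term exact sequence ending in $\pi_2(X) \to H_2(X) \to H_2(\pi_1(X)) \to 0$.)

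With these hypotheses in hand, Theorem~\ref{th_whitehead} applied to $f$ with $n=2$ yields that $\pi_1\ZZ_\infty f : \pi_1(\ZZ_\infty X) \to \pi_1(\ZZ_\infty B\pi_1(X))$ is an isomorphism. Combined with the defining identity $\ZZ_\infty \pi_1(X) = \pi_1(\ZZ_\infty B\pi_1(X))$, this produces the required natural isomorphism $\pi_1(\ZZ_\infty X) \cong \ZZ_\infty \pi_1(X)$. Naturality in $X$ follows from the functoriality of $X \mapsto B\pi_1(X)$ (functorial up to a canonical homotopy) and of $\ZZ_\infty$.

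I do not expect any serious obstacle: the statement is essentially a direct corollary of Theorem~\ref{th_whitehead} once the surjectivity on $H_2$ is recorded. The only mildly delicate point is being careful that the classifying map $X \to B\pi_1(X)$ is chosen naturally, but this is handled by fixing a functorial model of $B\pi_1(X)$ (e.g.\ $\overline{\mathcal W}\pi_1(X)$).
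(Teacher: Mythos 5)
Your proof is correct, but it takes a slightly different route from the paper's. The paper applies the Bousfield--Kan result \cite[Ch.~IV, Prop.~5.1]{BousfieldKan} directly to the classifying map $X\to B\pi_1(X)$, using only that this map is an isomorphism on $\pi_1$ and (trivially, since $\pi_2 B\pi_1(X)=0$) an epimorphism on $\pi_2$; no homology computation is needed at all. You instead verify the \emph{homological} hypotheses ($H_0,H_1$ isomorphisms, $H_2$ epimorphism via the five-term exact sequence $\pi_2(X)\to H_2(X)\to H_2(\pi_1 X)\to 0$) and invoke Theorem~\ref{th_whitehead}, which the paper itself deduces from the same Bousfield--Kan proposition combined with Kan's Whitehead theorem. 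So your argument passes through strictly more machinery than necessary, but every step is sound: the Serre spectral sequence argument for surjectivity on $H_2$ is the standard one (and the local coefficient issue is harmless since $H_0(\widetilde X)=\ZZ$ is the trivial system and $H_1(\widetilde X)=0$), and the application of Theorem~\ref{th_whitehead} with $n=2$ gives exactly the isomorphism on $\pi_1$ of the completions. The paper's version is more economical; yours has the small advantage of making visible that only the $2$-type of the homology of $X$ enters, which is consistent with the subsequent corollary that $\ZZ_\infty\pi_1(X)$ is a homology invariant.
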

\begin{proof} Consider the classifying space $Y=B(\pi_1X).$
Then the map $X\to Y$ induces an isomorphism $\pi_1X\cong \pi_1Y $ and an epimorphism $\pi_2X\epi \pi_2Y.$ Hence, by \cite[Ch. IV, Proposition 5.1]{BousfieldKan} the map $\ZZ_\infty X \to \ZZ_\infty Y$ induces an isomorphism $\pi_1(\ZZ_\infty X)\cong\pi_1(\ZZ_\infty Y)= \ZZ_\infty (\pi_1 X).$
\end{proof}

\begin{Corollary} The group $\ZZ_\infty (\pi_1 X)$ is an invariant of homological equivalence of spaces.
\end{Corollary}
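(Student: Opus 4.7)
The plan is to combine Proposition \ref{prop_Z-inf-X} with the $\ZZ$-homological invariance of the Bousfield--Kan completion recalled at the start of Section 3. Given a $\ZZ$-homological equivalence $f\colon X\to Y$ of (path-connected, pointed) spaces, I would first apply the Bousfield--Kan theorem to conclude that the induced map $\ZZ_\infty f\colon \ZZ_\infty X\to \ZZ_\infty Y$ is a homotopy equivalence of spaces. Taking $\pi_1$ then yields an isomorphism $\pi_1(\ZZ_\infty X)\cong \pi_1(\ZZ_\infty Y)$.

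Next, I would invoke Proposition \ref{prop_Z-inf-X} on both sides to identify $\pi_1(\ZZ_\infty X)$ with $\ZZ_\infty(\pi_1 X)$ and $\pi_1(\ZZ_\infty Y)$ with $\ZZ_\infty(\pi_1 Y)$; the naturality of the isomorphism (which is visible from the construction via $Y=B\pi_1 X$ in the proof of that proposition) guarantees that the composite isomorphism $\ZZ_\infty(\pi_1 X)\cong \ZZ_\infty(\pi_1 Y)$ is the one induced by $\pi_1(f)$. This proves the corollary.

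Strictly speaking, there is no substantive obstacle: all the work has been done in the preceding proposition and in the Bousfield--Kan invariance statement. The only minor bookkeeping is to verify that the natural comparison $\pi_1(\ZZ_\infty(-))\to \ZZ_\infty(\pi_1(-))$ is natural in the space (not just an abstract isomorphism for each $X$), which follows immediately from inspection of the map $X\to B\pi_1 X$ used in the proof of Proposition \ref{prop_Z-inf-X} since that assignment is functorial on pointed connected spaces.
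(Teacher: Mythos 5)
Your argument is correct and is exactly the one the paper intends: combine the Bousfield--Kan invariance of $\ZZ_\infty X$ under $\ZZ$-homology equivalence with Proposition \ref{prop_Z-inf-X}. The naturality remark is a sensible bit of extra care but does not change the substance.
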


A group homomorphism $G\to G'$ is said to be {\it $n$-connected} if  the map $H_iG\to H_iG'$  is an isomorphism for $i<n$ and an epimorphism for $i=n.$ Stallings' theorem \cite[Th.3.4]{Stallings}  says that any $2$-connected homomorphism $G\to G'$ induces an isomorphism $\nu_n G\cong \nu_n G'$ for any $n.$ It follows that it induces an isomorphism of pro-nilpotent completions $\widehat G\cong \widehat G'.$

\begin{Proposition}[Stallings' theorem for $\ZZ_\infty G$] \label{prop_Stallings} Let $f:G\to G'$ be a group homomorphism. Then the following holds.
\begin{enumerate}
\item If $f$ is 1-connected, then $\ZZ_\infty G\to \ZZ_\infty G'$ is an epimorphism.
\item If $f$ is 2-connected, then $\ZZ_\infty G\to \ZZ_\infty G' $ is an isomorphism  $\ZZ_\infty G \cong \ZZ_\infty G'.$
\end{enumerate}
\end{Proposition}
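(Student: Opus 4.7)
The plan is to deduce this directly from the Bousfield--Kan Whitehead theorem (Theorem \ref{th_whitehead}), applied to the induced map of classifying spaces $Bf\colon BG\to BG'$. Since $H_i(BG)=H_iG$ and $H_i(BG')=H_iG'$, the hypothesis that $f$ is $n$-connected (for $n=1$ or $n=2$) translates verbatim into the hypothesis of Theorem \ref{th_whitehead} for $Bf$ with $R=\ZZ$.

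First I would recall the definition $\ZZ_\infty G=\pi_1(\ZZ_\infty BG)$ and observe that $BG$ and $BG'$ are connected pointed spaces, so Theorem \ref{th_whitehead} applies to $Bf$. For part (1), applying the theorem with $n=1$ yields that $\pi_1(\ZZ_\infty BG)\to\pi_1(\ZZ_\infty BG')$ is an epimorphism, which is exactly the statement that $\ZZ_\infty G\to\ZZ_\infty G'$ is surjective. For part (2), applying the theorem with $n=2$ gives that $\pi_i(\ZZ_\infty BG)\to\pi_i(\ZZ_\infty BG')$ is an isomorphism for $i<2$; in particular the map on $\pi_1$ is an isomorphism, which is the desired conclusion.

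There is essentially no obstacle here: the entire content has been packaged into Theorem \ref{th_whitehead}, and the proof amounts to translating between the topological and group-theoretic formulations via $H_*(BG)\cong H_*G$ and the identification $\pi_1(\ZZ_\infty BG)=\ZZ_\infty G$. The only thing worth a brief comment would be that Proposition \ref{prop_Z-inf-X} is not needed, because by construction $\ZZ_\infty G$ is defined via $BG$ rather than some more general space; still, one could alternatively phrase the argument by first noting that the classifying spaces have the same $\pi_1$-data as $G$ and $G'$ and then invoking the proposition.
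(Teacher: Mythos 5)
Your proposal is correct and coincides with the paper's own argument, which likewise just applies Theorem \ref{th_whitehead} to $Bf\colon BG\to BG'$ with $n=1$ and $n=2$ and reads off the conclusion on $\pi_1$ via the definition $\ZZ_\infty G=\pi_1(\ZZ_\infty BG)$. Nothing further is needed.
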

\begin{proof}
It follows from Theorem \ref{th_whitehead}.
\end{proof}

\begin{Corollary}
A $2$-connected homomorphism $G\to G'$ induces an isomorphism
$${\varprojlim}^1\ \BB_nG\cong {\varprojlim}^1\ \BB_nG'.$$
\end{Corollary}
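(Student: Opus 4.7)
The plan is to combine the short exact sequence of Proposition \ref{prop_Z-inf_L0}, classical Stallings' theorem for $\widehat{G}$, and Proposition \ref{prop_Stallings} for $\ZZ_\infty G$, and then deduce the isomorphism on the kernels by a five-lemma type argument.

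More precisely, I would first observe that the short exact sequence
$$1 \longrightarrow {\varprojlim}^1 \BB_n G \longrightarrow \ZZ_\infty G \longrightarrow \widehat{G} \longrightarrow 1$$
is natural in $G$ by the construction in Proposition \ref{prop_Z-inf_L0} (it comes from Milnor's exact sequence applied to a functorial choice of free simplicial resolution, namely $\mathcal{G}\overline{\mathcal{W}}G \to G$). Hence a homomorphism $f: G \to G'$ induces a commutative diagram with exact rows, with vertical maps ${\varprojlim}^1 \BB_n f$, $\ZZ_\infty f$, and $\widehat{f}$.

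Next, I would invoke the two isomorphism statements separately. By Stallings' classical theorem (recalled just before Proposition \ref{prop_Stallings}), the assumption that $f$ is $2$-connected ensures $\widehat{f}: \widehat{G} \to \widehat{G'}$ is an isomorphism. By Proposition \ref{prop_Stallings}(2), the same assumption ensures $\ZZ_\infty f: \ZZ_\infty G \to \ZZ_\infty G'$ is an isomorphism. So in the above diagram the middle and right vertical arrows are isomorphisms.

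Finally, I would conclude by the five lemma applied to the two short exact sequences; since ${\varprojlim}^1 \BB_n G$ and ${\varprojlim}^1 \BB_n G'$ sit as \emph{kernels} of the quotient maps, a standard diagram chase (injectivity from injectivity of the inclusions into $\ZZ_\infty$, surjectivity using that any lift from $\ZZ_\infty G'$ to $\ZZ_\infty G$ of an element from ${\varprojlim}^1 \BB_n G'$ maps to the trivial element in $\widehat{G}$ since $\widehat{f}$ is an isomorphism) yields that ${\varprojlim}^1 \BB_n f$ is an isomorphism. There is no real obstacle here — the work has been done in Propositions \ref{prop_Z-inf_L0} and \ref{prop_Stallings}, and the corollary is a formal consequence; the only thing to be slightly careful about is that $\ZZ_\infty G$ is non-abelian in general, so one uses the five lemma in the category of groups rather than in abelian groups, but the kernel lies in the center-free-of-nothing position of an exact sequence of groups, which still admits the standard diagram chase.
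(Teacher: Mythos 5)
Your argument is correct and is exactly the paper's proof, which simply cites Proposition \ref{prop_Stallings} and Proposition \ref{prop_Z-inf_L0}; you have merely spelled out the naturality of the short exact sequence and the routine diagram chase on kernels that the authors leave implicit. No further comment is needed.
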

\begin{proof}
It follows from Proposition \ref{prop_Stallings} and Proposition \ref{prop_Z-inf_L0}.
\end{proof}

\begin{Proposition}\label{prop_f.p.H2} Let $G$ be a group such that $H_1G$ is finitely generated and $H_2G$ is finite. Then the natural morphism $\ZZ_\infty G \to \widehat G$ is an isomorphism:
$$\ZZ_\infty G \cong \widehat G.$$
\end{Proposition}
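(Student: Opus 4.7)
The plan is to reduce the claim directly to the short exact sequence provided by Proposition \ref{prop_Z-inf_L0}, namely
$$1 \longrightarrow {\varprojlim}^1\: \BB_n G \longrightarrow \ZZ_\infty G \longrightarrow \widehat G \longrightarrow 1,$$
and show that under the hypothesis the $\varprojlim^1$ term vanishes. So the whole task is to prove ${\varprojlim}^1\: \BB_n G = 0$.

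First I would invoke Corollary \ref{cor_finpres}: since $H_1 G$ is finitely generated and $H_2 G$ is finite, each $\BB_n G$ is a finite abelian group. Then I would appeal to the standard fact that the derived inverse limit ${\varprojlim}^1$ of a tower of finite (abelian) groups vanishes. The reason is the Mittag-Leffler condition: for each fixed $n$, the images ${\sf im}(\BB_m G \to \BB_n G)$ for $m \geq n$ form a decreasing chain of subgroups of the finite group $\BB_n G$, so they stabilize. Mittag-Leffler implies ${\varprojlim}^1 = 0$.

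Given ${\varprojlim}^1\: \BB_n G = 0$, the short exact sequence of Proposition \ref{prop_Z-inf_L0} degenerates to an isomorphism $\ZZ_\infty G \xrightarrow{\cong} \widehat G$, which is the desired conclusion.

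The proof is essentially mechanical once the ingredients are identified; there is no real obstacle. The only subtlety worth flagging is the use of Mittag-Leffler for towers of non-abelian (or rather, possibly non-central) groups, but here the $\BB_n G$ are all abelian (as noted in Definition \ref{def_Baer}), so the classical statement for abelian towers applies without modification.
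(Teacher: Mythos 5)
Your proof is correct and follows exactly the paper's own argument: finiteness of the Baer invariants via Corollary \ref{cor_finpres}, vanishing of ${\varprojlim}^1$ by Mittag-Leffler, and the short exact sequence of Proposition \ref{prop_Z-inf_L0}. No issues.
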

\begin{proof}
By Corollary \ref{cor_finpres} the Baer invariants $\BB_n G$ are finite. The Mittag-Leffler condition implies ${\varprojlim}^1\:  \BB_n G=0.$
\end{proof}

If $U$ is a normal subgroup of a group $G,$ then we set $\widehat U_G=\varprojlim\: U/\gamma_n(U,G).$ Note that $\nu_n(G\ltimes U)=\nu_n G \ltimes (U/\gamma_{n+1}(U,G)).$  Then
$$\widehat{G\ltimes U}=\widehat G \ltimes \widehat U_G.$$

\begin{Proposition}\label{prop_resoluion0} Let $U\mono E\epi G$ be a short exact sequence of groups and assume that the map $\ZZ_\infty E\to \widehat E$ is an isomorphism. Then there is an exact sequence
$$ \widehat U_E \longrightarrow \widehat E \longrightarrow \ZZ_\infty G \longrightarrow 1.$$
\end{Proposition}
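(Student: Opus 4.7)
My strategy is to combine the right exactness of $\ZZ_\infty$ with the identification $\widehat{E\ltimes U}=\widehat E\ltimes \widehat U_E$ recalled immediately before the statement. By Proposition \ref{prop_Z-inf_L0} the functor $\ZZ_\infty$ is right exact, so Theorem \ref{th_right_exact}(3), applied to the short exact sequence $U\mono E\epi G,$ yields an exact sequence
$$(\ZZ_\infty)_E U \longrightarrow \ZZ_\infty E \longrightarrow \ZZ_\infty G \longrightarrow 1,$$
in which $(\ZZ_\infty)_E U=\ker(\ZZ_\infty(E\ltimes U)\to \ZZ_\infty E)$ with the kernel taken along the projection $\pi\colon E\ltimes U\to E,$ while the map to $\ZZ_\infty E$ is induced by the multiplication $\mu\colon(e,u)\mapsto eu.$ Using the hypothesis $\ZZ_\infty E\cong \widehat E,$ the proof reduces to showing that the image of $(\ZZ_\infty)_E U$ in $\widehat E$ coincides with the image of the natural map $\widehat U_E\to \widehat E$ induced by $U\hookrightarrow E.$

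The key intermediate step is to produce a surjection $(\ZZ_\infty)_E U \twoheadrightarrow \widehat U_E.$ I would form the commutative square with horizontal arrows $\ZZ_\infty\pi$ on top and $\widehat\pi$ on the bottom, and with vertical arrows the canonical surjections $\ZZ_\infty\to \widehat{(-)}$ of Proposition \ref{prop_Z-inf_L0}; the hypothesis on $E$ makes the right vertical arrow an isomorphism, i.e.\ forces ${\varprojlim}^1\BB_n E=0.$ Given any $\hat u\in \widehat U_E=\ker(\widehat\pi),$ surjectivity of the left vertical arrow provides a lift $y\in \ZZ_\infty(E\ltimes U)$ whose image equals $\hat u;$ commutativity together with injectivity of the right vertical arrow forces $\ZZ_\infty\pi(y)=1,$ so in fact $y\in (\ZZ_\infty)_E U.$ Hence the left vertical arrow restricts to a surjection $(\ZZ_\infty)_E U\twoheadrightarrow \widehat U_E.$

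To identify the images, I would then replace $\pi$ by $\mu$ in the same square. By commutativity, the image of $(\ZZ_\infty)_E U$ in $\widehat E$ (through $\ZZ_\infty E\cong \widehat E$) equals the image under $\widehat\mu$ of the image of $(\ZZ_\infty)_E U$ in $\widehat{E\ltimes U};$ by the previous paragraph that latter image is all of $\widehat U_E\subseteq \widehat E\ltimes \widehat U_E.$ Under this identification $\widehat U_E$ sits as $\{(1,\hat u)\},$ and since $\mu(1,u)=u$ is the inclusion $U\hookrightarrow E,$ functoriality of $\widehat{(-)}$ identifies $\widehat\mu$ on this subgroup with the natural map $\widehat U_E\to \widehat E.$ Combining, the image of $(\ZZ_\infty)_E U$ in $\widehat E$ is exactly the image of $\widehat U_E\to \widehat E,$ and replacing $(\ZZ_\infty)_E U$ by $\widehat U_E$ in the right exact sequence gives the required exactness.

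The one place where care is needed is to distinguish the two natural maps $E\ltimes U\rightrightarrows E$: the kernel defining $(\ZZ_\infty)_E U$ is taken along the projection $\pi,$ whereas the connecting map in Theorem \ref{th_right_exact}(3) is induced by the multiplication $\mu.$ Getting both squares set up and correctly related is the main bookkeeping obstacle; once that is kept straight, the hypothesis enters at exactly one point, namely to make the right vertical arrow in the $\pi$-square an isomorphism and thereby enable the diagram chase that produces the surjection $(\ZZ_\infty)_E U\twoheadrightarrow \widehat U_E.$
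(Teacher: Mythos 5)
Your proof is correct and follows essentially the same route as the paper: right exactness gives $(\ZZ_\infty)_E U \to \ZZ_\infty E \to \ZZ_\infty G \to 1$, one produces a surjection $(\ZZ_\infty)_E U \twoheadrightarrow \widehat U_E$, and the commutative square together with the isomorphism $\ZZ_\infty E \cong \widehat E$ identifies the image of $(\ZZ_\infty)_E U$ in $\widehat E$ with that of $\widehat U_E$. The only (harmless) difference is that the paper obtains the surjection onto $\widehat U_E$ directly from the surjectivity of $\ZZ_\infty(E\ltimes U)\to \widehat{E\ltimes U}$ and its compatibility with the semidirect decompositions, whereas you run a diagram chase that also invokes the injectivity of $\ZZ_\infty E\to \widehat E$ at that step.
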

\begin{proof}
 Since the functor $G\mapsto \ZZ_\infty G$ is right exact, by Theorem \ref{th_right_exact} we have an exact sequence $\ZZ_{\infty,E} U \to \ZZ_\infty E \to \ZZ_\infty G \to 1.$ Using that the map $\ZZ_\infty (E\ltimes U)\to \widehat{E\ltimes U}$ is an epimorphism, we obtain that $\ZZ_{\infty,E} U\to \widehat{U}_E $ is an epimorphism.
Since $\ZZ_\infty E \cong \widehat E,$ the commutative square
$$
\begin{tikzcd}
\ZZ_{\infty,E} U \arrow[r]\arrow[d,twoheadrightarrow] & \ZZ_\infty E\arrow[d,"\cong"] \\
\widehat U_E\arrow[r] & \widehat E
\end{tikzcd}
$$
implies that the map $\ZZ_{\infty,E}U\to \widehat E$ has the same image as $\widehat U_E\to \widehat E.$ Therefore we obtain a short exact sequence $ \widehat U_E \to \widehat E \to \ZZ_\infty G \to 1.$
\end{proof}

\begin{Corollary}\label{prop_resoluion}
Let $U\mono E\epi G$ be a short exact sequence of groups. Assume that  $H_1E$ is finitely generated and $H_2E$ is finite.  Then there is an exact sequence
$$ \widehat U_E \longrightarrow \widehat E \longrightarrow \ZZ_\infty G \longrightarrow 1.$$
\end{Corollary}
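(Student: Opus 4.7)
The plan is very short because the corollary is essentially a direct combination of two results already established in this section. The hypothesis that $H_1 E$ is finitely generated and $H_2 E$ is finite is precisely what is needed to trigger Proposition \ref{prop_f.p.H2}, which yields the isomorphism $\ZZ_\infty E \cong \widehat E$. Once this is in hand, the assumption of Proposition \ref{prop_resoluion0} is satisfied by $E$, and the exact sequence
$$\widehat U_E \longrightarrow \widehat E \longrightarrow \ZZ_\infty G \longrightarrow 1$$
follows immediately from Proposition \ref{prop_resoluion0} applied to the given short exact sequence $U \mono E \epi G$.

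So the proof reduces to two lines: first verify that the hypotheses on $H_1 E$ and $H_2 E$ ensure that $\ZZ_\infty E \to \widehat E$ is an isomorphism by quoting Proposition \ref{prop_f.p.H2}; then apply Proposition \ref{prop_resoluion0}. There is no real obstacle: the only thing to check is that no additional hypothesis is required on $U$ or on $G$, but Proposition \ref{prop_resoluion0} indeed imposes conditions only on $E$, so the implication goes through without further work.

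The conceptual content of the corollary is that whenever $E$ has sufficiently finite low-dimensional homology (so that the Baer invariants $\BB_n E$ are finite by Corollary \ref{cor_finpres} and the Mittag-Leffler condition kills ${\varprojlim}^1 \BB_n E$), the pronilpotent completion $\widehat E$ already plays the role of the right exact $\ZZ$-completion of $E$, and then the right exactness of $\ZZ_\infty$ (Proposition \ref{prop_Z-inf_L0}, Theorem \ref{th_right_exact}) together with the identification $\widehat{E\ltimes U} = \widehat E \ltimes \widehat U_E$ gives the desired presentation of $\ZZ_\infty G$ in terms of data computed at the level of $E$. This is why the corollary will be useful for concrete computations, such as the $3$-manifold examples $X_k$ whose fundamental groups fit into short exact sequences $\ZZ^2 \mono G_k \epi \ZZ$ where the middle term has trivially well-behaved homology.
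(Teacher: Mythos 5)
Your proof is correct and is exactly the paper's argument: invoke Proposition \ref{prop_f.p.H2} to get $\ZZ_\infty E \cong \widehat E$ from the hypotheses on $H_1E$ and $H_2E$, then apply Proposition \ref{prop_resoluion0}. (One small slip in your closing commentary: the extension actually used for the $3$-manifold computation is $\ZZ \mono E_k \epi G_k$ with $H_2E_k \cong \ZZ/4$, not $\ZZ^2 \mono G_k \epi \ZZ$, since it is the \emph{middle} term that must have finite $H_2$; this does not affect your proof.)
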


\begin{proof}
This follows from Proposition \ref{prop_resoluion0} and Proposition \ref{prop_f.p.H2}.
\end{proof}

\begin{Remark} Corollary  \ref{prop_resoluion} gives an effective way for computing $\ZZ_\infty G.$ It is thus enough
to find an extension $U\mono E\epi G$ with finitely generated $H_1E,$  finite $H_2E$ and computable $\widehat E$ and $\widehat U_E$, and then use the exact sequence.
\end{Remark}

If $G=F/R$ is a free presentation of $G$ we set $\bar R= \varprojlim \: (R\cdot \gamma_n F)/\gamma_n F.$ Note that $\bar  R$ is the closure of $R$ in $\widehat F$ in the limit topology.

\begin{Proposition}\label{prop_construction_by_presentation} If $G=F/R$ is a free presentation of a group $G$, then there is a commutative diagram with exact rows and columns
$$
\begin{tikzcd}
& \widehat R_F\arrow[r]\arrow[d] & \widehat F \arrow[d,equal]\arrow[r] & \ZZ_\infty G \arrow[d,twoheadrightarrow] \arrow[r] & 1\\
1\arrow[r] & \bar R \arrow[d,twoheadrightarrow] \arrow[r] & \widehat F\arrow[r] & \widehat G\arrow[r] & 1\\
& {\varprojlim}^1 \BB_n G. & & &
\end{tikzcd}
$$
\end{Proposition}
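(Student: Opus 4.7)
The plan is to prove the two horizontal rows and the single column separately, and then observe that the three vertical maps make the squares commute by naturality.

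The top row follows from Proposition~\ref{prop_resoluion0} applied to the presentation $R \mono F \epi G$: the hypothesis $\ZZ_\infty F \cong \widehat F$ holds because, for a free group $F$, the constant simplicial object $F_\bullet = F$ is already a free simplicial resolution, so $\BB_n F = \pi_1(\nu_n F)=0$ for every $n$, and the exact sequence \eqref{eq_exact_seq_lim1} of Proposition~\ref{prop_Z-inf_L0} collapses. Thus one obtains the exact sequence $\widehat R_F \to \widehat F \to \ZZ_\infty G \to 1$.

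For the middle row, one applies $\varprojlim$ to the short exact sequence of towers of nilpotent groups
\[
1 \longrightarrow R\gamma_n F/\gamma_n F \longrightarrow F/\gamma_n F \longrightarrow G/\gamma_n G \longrightarrow 1,
\]
using the identification $F/R\gamma_n F \cong G/\gamma_n G$. The transition maps in the left-hand tower are surjective: any $y \in R\gamma_{n-1} F$ can be written $y=uv$ with $u\in R \subseteq R\gamma_n F$ and $v\in\gamma_{n-1}F$, so $u\gamma_n F$ projects to $y\gamma_{n-1}F$. Hence $\varprojlim^1(R\gamma_n F/\gamma_n F)$ is trivial and the six-term Milnor-type exact sequence supplies $1 \to \bar R \to \widehat F \to \widehat G \to 1$.

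For the first column, compare the two towers $R/\gamma_n(R,F)$ and $R/(R\cap\gamma_n F) \cong R\gamma_n F/\gamma_n F$ via the short exact sequence of towers
\[
1 \longrightarrow (R\cap\gamma_n F)/\gamma_n(R,F) \longrightarrow R/\gamma_n(R,F) \longrightarrow R\gamma_n F/\gamma_n F \longrightarrow 1,
\]
in which the left-hand term is the abelian group $\BB_{n-1}G$ by the Hopf formula from Definition~\ref{def_Baer}. The transition maps in the middle tower are surjective, so $\varprojlim^1 R/\gamma_n(R,F)=\ast$, and the corresponding $\varprojlim/\varprojlim^1$ exact sequence (valid because the kernel term is abelian) reads
\[
1 \longrightarrow \varprojlim \BB_{n-1}G \longrightarrow \widehat R_F \longrightarrow \bar R \longrightarrow \varprojlim^1 \BB_{n-1}G \longrightarrow 1.
\]
A reindexing identifies $\varprojlim^1\BB_{n-1}G$ with $\varprojlim^1\BB_n G$, giving the claimed first column. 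Commutativity of the diagram is then automatic from the construction of the maps: the vertical arrow $\widehat R_F\to\bar R$ is induced by the quotients $R/\gamma_n(R,F)\to R\gamma_n F/\gamma_n F$, the middle arrow is the identity, and $\ZZ_\infty G\to \widehat G$ is the canonical surjection appearing in \eqref{eq_exact_seq_lim1}. The main subtlety lies in invoking the $\varprojlim^1$ six-term exact sequence for towers of not-necessarily-abelian groups; this is made routine by the facts that the only kernel tower whose $\varprojlim^1$ survives is the abelian one $(\BB_{n-1}G)$, and that the other relevant $\varprojlim^1$ vanishings are guaranteed by surjectivity of transition maps.
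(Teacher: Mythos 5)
Your proof is correct, and the rows are handled essentially as in the paper: the top row is Proposition~\ref{prop_resoluion0} applied to $R\mono F\epi G$ (your justification of $\ZZ_\infty F\cong\widehat F$ via $\BB_nF=\pi_1(\nu_nF_\bullet)=0$ for the constant resolution is a nice self-contained alternative to the paper's citation of Bousfield--Kan's equivalence $\ZZ_\infty(BF)\simeq B\widehat F$), and the middle row is obtained from the tower $R\gamma_nF/\gamma_nF\mono F/\gamma_nF\epi G/\gamma_nG$ exactly as the paper does. Where you genuinely diverge is the left column: the paper gets it in one line by the snake lemma applied to the two rows, using that $\ker(\ZZ_\infty G\to\widehat G)\cong{\varprojlim}^1\BB_nG$ is already known from Proposition~\ref{prop_Z-inf_L0}, so no new ${\varprojlim}^1$ computation is needed; you instead recompute the cokernel of $\widehat R_F\to\bar R$ directly from the tower of extensions $\BB_{n-1}G\mono R/\gamma_n(R,F)\epi R\gamma_nF/\gamma_nF$. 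Your route is more computational but has the merit of exhibiting ${\varprojlim}^1\BB_nG$ concretely as a defect of lifting elements of $\bar R$ to $\widehat R_F$; its cost is the subtlety you flag at the end, which deserves a more careful resolution than ``the kernel tower is abelian'': for the connecting map $\bar R\to{\varprojlim}^1\BB_{n-1}G$ to be a homomorphism and the six-term sequence to be an exact sequence of groups, one wants the kernel tower to be \emph{central} in the middle tower, not merely abelian. This does hold here, since $[\gamma_nF,R]\subseteq\gamma_{n+1}(R,F)$ (proved by induction using the Hall--Witt identity, together with $[\gamma_i(R,F),\gamma_jF]\subseteq\gamma_{i+j}(R,F)$), whence $[R\cap\gamma_nF,R]\subseteq\gamma_n(R,F)$; you should either include this verification or simply switch to the paper's snake-lemma argument, which sidesteps the issue entirely.
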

\begin{proof} Bousfield and Kan \cite[Ch. IV, \S 1]{BousfieldKan} proved that for any free group $F$  there is a homotopy equivalence $\ZZ_\infty(BF)\cong B\widehat{F}$. Hence the map $\ZZ_\infty F\to \widehat{F}$ is an isomorphism. The first horizontal exact sequence follows from Proposition \ref{prop_resoluion0}. The second horizontal exact sequence follows from the exact sequence $ (R\cdot \gamma_{n+1} F )/\gamma_{n+1} F \mono \nu_n F \epi \nu_n G.$ The exactness of the vertical left hand sequence follows from the snake lemma.
\end{proof}

For a group $G$ we set $\gamma_\omega G =\bigcap_{n=1}^\infty \gamma_n G$ and $\nu_\omega G=G/\gamma_\omega G.$

\begin{Proposition}\label{prop_gamma_omega} If $G$ is a group with finitely generated $H_1G,$ then the maps $G\to \ZZ_\infty G \to \widehat G$ induce isomorphisms
$$ \nu_n G \cong \nu_n (\ZZ_\infty G) \cong \nu_n \widehat G$$
for any $n,$
and the maps from Proposition \ref{prop_Z-inf_L0} define isomorphisms
$$\gamma_\omega( \ZZ_\infty G )\cong {\varprojlim}^1\: \BB_nG, \hspace{1cm} \nu_\omega( \ZZ_\infty G) \cong \widehat G.$$
\end{Proposition}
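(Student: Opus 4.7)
The plan is to pass to an explicit free presentation via Proposition \ref{prop_construction_by_presentation}. Since $H_1 G$ is finitely generated, I choose a presentation $G = F/R$ with $F$ a \emph{finitely generated} free group, and write $\ZZ_\infty G = \widehat F / K$ with $K$ the image of $\widehat R_F \to \widehat F$, and $\widehat G = \widehat F/\bar R$. Both $\nu_m \ZZ_\infty G$ and $\nu_m \widehat G$ are then presented as quotients of $\widehat F$ in which the denominator is enlarged by $\gamma_{m+1}\widehat F$, and the whole argument reduces to comparing these denominators.

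The first ingredient is the classical identification $\nu_m \widehat F \cong \nu_m F$ for a finitely generated free group $F$, equivalently $\gamma_{m+1}\widehat F = \ker(\widehat F \twoheadrightarrow \nu_m F)$; this is a standard consequence of the residual nilpotence of $F$ together with Mittag-Leffler applied to the tower $\gamma_{m+1}F/\gamma_n F$. Granting this, I compute the image of $K$ in $\nu_m F = \widehat F/\gamma_{m+1}\widehat F$: the composition $\widehat R_F \to \widehat F \to \nu_m F$ factors through the projection $\widehat R_F \twoheadrightarrow R/\gamma_n(R,F)$ followed by $R/\gamma_n(R,F) \to F/\gamma_{m+1}F$ (well defined for $n$ sufficiently large since $\gamma_n(R,F) \subseteq \gamma_n F \subseteq \gamma_{m+1}F$). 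Since the first projection is surjective (Mittag-Leffler), the image is exactly $R\cdot \gamma_{m+1}F/\gamma_{m+1}F$. An analogous argument, using $\bar R = \varprojlim (R\cdot\gamma_n F)/\gamma_n F$, gives the same image for $\bar R \to \widehat F \to \nu_m F$. Therefore
$$\nu_m \ZZ_\infty G \cong F/(R\cdot \gamma_{m+1}F) = \nu_m G \cong \nu_m \widehat G,$$
with the isomorphisms induced by the natural maps $G \to \ZZ_\infty G \to \widehat G$.

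For the $\gamma_\omega$ identifications, the surjection $\pi\colon \ZZ_\infty G \twoheadrightarrow \widehat G$ now induces an isomorphism on every $\nu_m$, so its kernel $\ker\pi = {\varprojlim}^1\BB_nG$ lies in $\bigcap_m \gamma_{m+1}\ZZ_\infty G = \gamma_\omega\ZZ_\infty G$. Conversely, the identifications $\nu_m \widehat G \cong \nu_m G$ yield $\widehat G \cong \varprojlim_m \nu_m \widehat G$, whence $\gamma_\omega \widehat G = 0$; thus $\pi(\gamma_\omega \ZZ_\infty G) = 0$ forces $\gamma_\omega \ZZ_\infty G \subseteq \ker\pi$. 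Combining gives $\gamma_\omega \ZZ_\infty G \cong {\varprojlim}^1 \BB_n G$, and consequently $\nu_\omega \ZZ_\infty G \cong \widehat G$.

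The main obstacle I expect is the first ingredient, i.e. verifying $\nu_m \widehat F \cong \nu_m F$ for finitely generated free $F$; this is precisely where the hypothesis of finitely generated $H_1G$ is essentially used, since without it one cannot even arrange $F$ to be finitely generated, and for infinite rank $F$ the equality $\gamma_{m+1}\widehat F = \ker(\widehat F \to \nu_m F)$ need not hold. Once this classical point is settled, the rest is a direct diagram chase in the diagram of Proposition \ref{prop_construction_by_presentation}.
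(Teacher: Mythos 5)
Your argument has a genuine gap at the very first step: from the hypothesis that $H_1G$ is finitely generated you ``choose a presentation $G=F/R$ with $F$ a finitely generated free group.'' This is not possible in general, because finite generation of $H_1G$ does not imply finite generation of $G$ (e.g.\ an infinitely generated perfect group such as $\bigoplus_{i=1}^\infty A_5$ has $H_1=0$). A finitely generated free group admits a \emph{surjection} onto $G$ only when $G$ itself is finitely generated, whereas the hypothesis only guarantees a \emph{$1$-connected} homomorphism $F\to G$ from a finitely generated free group (pick elements of $G$ whose classes generate $H_1G$). This is precisely the difficulty the paper's proof is built to circumvent: it passes to $H\ZZ$-localizations, using that $L$ sends the $1$-connected map $F\to G$ to an epimorphism $LF\epi LG$ with $H_2LF=0$, that $\widehat{LF}=\widehat F$ and $\ZZ_\infty LG\cong \ZZ_\infty G$ by the two Stallings theorems, and then runs, with $U={\sf Ker}(LF\epi LG)$ in place of your $R$, exactly the comparison you describe — showing that $\widehat U_{LF}$ and $\bar U$ have the same image (namely that of $U$) in $\nu_n LF$. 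So your diagram chase is sound in spirit, and your second paragraph (deducing $\gamma_\omega(\ZZ_\infty G)\cong{\varprojlim}^1\BB_nG$ and $\nu_\omega(\ZZ_\infty G)\cong\widehat G$ from the $\nu_n$-isomorphisms) is correct and matches the paper; but without the localization step your proof only covers finitely generated $G$.

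A secondary, smaller point: the ``classical'' identification $\nu_m\widehat F\cong\nu_m F$ for finitely generated free $F$ (equivalently $\gamma_{m+1}\widehat F=\ker(\widehat F\to\nu_m F)$) is genuinely nontrivial — residual nilpotence plus Mittag--Leffler only gives the easy inclusion $\gamma_{m+1}\widehat F\subseteq\ker(\widehat F\to\nu_m F)$; the reverse inclusion is the Baumslag--Stammbach/Bousfield theorem that the paper cites as \cite[Th.~13.3]{Bousfield77}, \cite{BaumslagStammbach}. Treating it as a citation is fine (the paper does the same), but your parenthetical justification does not prove it.
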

\begin{proof}
In this proof we use the theory of $H\ZZ$-localizations of groups \cite{Bousfield75},  \cite{Bousfield77}. For a group $G$ we denote its $H\ZZ$-localization by $LG.$ We use the fact that the homomorphism to $H\ZZ$-localization $G\to LG$ is 2-connected (by definition, see \cite[Def.5.1]{Bousfield75} or \cite[page 1]{Bousfield77}) and the fact that localization takes epimorphisms to epimorphisms \cite[Cor.2.13]{Bousfield77}. Moreover, if $G\to G'$ is a 1-connected homomorphism, then $LG\to LG'$ is an epimorphism \cite[Cor. 2.13]{Bousfield77}.
In particular, Stalling's theorem implies that $\nu_n G\cong \nu_n LG.$

Let $G$ be a group with finitely generated $H_1G.$ Consider a 1-connected homomorphism $F\to G$ from a finitely generated free group. Then the map $LF\epi LG$ is an epimorphism and $H_2LF=0$ because $F\to LF$ is 2-connected. Set $U={\sf Ker}(LF\epi LG).$ Then by Corollary  \ref{prop_resoluion}  there is an exact sequence
$$\widehat U_{LF} \longrightarrow \widehat{LF} \longrightarrow \ZZ_\infty L G \longrightarrow 1.$$
Using the Stallings theorem for $\ZZ_\infty$ (Proposition \ref{prop_Stallings}) and the classical Stallings theorem, we obtain that $\ZZ_\infty LG \cong \ZZ_\infty G$ and $\widehat{LF}=\widehat F.$ Hence, we have an  exact sequence
$\widehat U_{LF} \to \widehat{LF} \epi \ZZ_\infty G.$
The exact sequence $U/(U\cap \gamma_{n+1}LF) \mono \nu_n L F\epi \nu_n G$ induces a short exact sequence $\bar U \mono \widehat{LF} \epi \widehat G,$ where $\bar U=\varprojlim U/(U\cap \gamma_{n+1}LF).$ It is known  \cite[Th. 13.3]{Bousfield77} (see also \cite{BaumslagStammbach}) that  $\nu_n F\cong \nu_n \widehat F$ and $\nu_n G \cong \nu_n \widehat G$ (this is why $F$ was chosen to be finitely generated).
The map $\widehat U_{LF}\to \nu_nLF$ equals to the composition $\widehat U_{LF}\epi  U/\gamma_{n+1}(U,LF) \to \nu_n LF.$ Hence, the image of $\widehat U_{LF}$ in $\nu_n LF$ equals to the image of $U$ in $ \nu_n LF.$
The map $\bar U\to \nu_n LF$ equals to the composition $\bar U\epi U/(U\cap \gamma_{n+1} LF) \to \nu_n LF.$ Hence, the image of $\bar U$ in $\nu_n LF$ also equals to the image of $U.$  Then the exact sequences $\widehat U_{LF} \to \nu_nLF \to \nu_n(\ZZ_\infty G) \to 1 $ and $ \bar U \to \nu_nLF \to \nu_n\widehat G \to 1$ imply that the map $\ZZ_\infty G\epi \widehat G $ gives rise to an isomorphism $ \nu_n(\ZZ_\infty G)\cong \nu_n \widehat G.$

Set $K={\sf Ker}(\ZZ_\infty G \epi \widehat G ).$ By Proposition \ref{prop_Z-inf_L0}  we have $K\cong {\varprojlim}^1\: \BB_n G.$ Since $\gamma_\omega(\widehat G)=1,$ we have $\gamma_\omega(\ZZ_\infty G )\subseteq K.$ On the other hand the isomorphism $\nu_n \ZZ_\infty G \cong \nu_n \widehat G$ implies that $K\subseteq \gamma_{n+1} \ZZ_\infty G$ holds for any $n.$ Thus $K=\gamma_\omega(\ZZ_\infty G).$
\end{proof}

\begin{Proposition}\label{prop_nilpotent_r}
For a nilpotent group $N$ the natural map $N\to \ZZ_\infty N$ is an isomorphism
$$N\cong  \ZZ_\infty N.$$
\end{Proposition}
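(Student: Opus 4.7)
The plan is to reduce the statement to a classical property of the Bousfield-Kan completion on nilpotent spaces. By the definition $\ZZ_\infty N := \pi_1(\ZZ_\infty BN)$ and the identification $N = \pi_1(BN)$, the natural map $N \to \ZZ_\infty N$ is precisely the map on $\pi_1$ induced by the completion $BN \to \ZZ_\infty BN$, so it suffices to show the latter is an isomorphism on $\pi_1$.

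The first step is to verify that $BN$ is a nilpotent space in the sense of \cite{BousfieldKan}. Since $\pi_i(BN) = 0$ for $i \geq 2$, only the action of $\pi_1(BN) = N$ on itself by conjugation needs to be checked. By hypothesis $N$ is nilpotent, and the lower central series of $N$ provides an $N$-invariant filtration whose successive quotients carry the trivial $N$-action, making the conjugation action nilpotent. Hence $BN$ is a nilpotent space. The second step is to invoke the Bousfield-Kan theorem that for a nilpotent space $X$ the completion map $X \to \ZZ_\infty X$ is a weak equivalence (Ch.~V of \cite{BousfieldKan}). Applied to $X = BN$ and passing to $\pi_1$, this yields the desired isomorphism $N \cong \ZZ_\infty N$, naturally identified with the map appearing in the statement.

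An alternative, purely algebraic route proceeds via Proposition \ref{prop_Z-inf_L0}: since $N$ is nilpotent of class $c$, the tower $\{\nu_n N\}$ stabilizes for $n \geq c$, so $\widehat{N} = N$, and the composition $N \to \ZZ_\infty N \to \widehat{N}$ is the identity, forcing $N \to \ZZ_\infty N$ to be split injective. One is then reduced to establishing the vanishing ${\varprojlim}^1\: \BB_n N = 0$. I expect this to be the main obstacle along the algebraic route, as it does not follow immediately from nilpotency: one would have to verify the Mittag-Leffler condition for the Baer tower, for instance by analyzing the long exact sequence of simplicial homotopy groups associated with $\gamma_{n+1} F_\bullet / \gamma_{n+2} F_\bullet \mono \nu_{n+1} F_\bullet \epi \nu_n F_\bullet$ for $n \geq c$ and tracing how the stabilization of $\pi_0$ (coming from $\nu_n N = N$) propagates to the $\pi_1$-terms. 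The topological route via Bousfield-Kan sidesteps this computation entirely, which is why I would adopt it as the main line of proof.
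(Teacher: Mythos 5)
Your main (topological) line of argument is correct, but it is not the route the paper takes: it is exactly the alternative that the paper relegates to the remark immediately following the proposition (``can be also easily deduced from the theory of Bousfield--Kan of nilpotent spaces [Ch.V.3]''). The paper's actual proof is the algebraic one you sketch second and then abandon, and it is worth seeing how short it is: writing $N=F/R$, nilpotency gives $\gamma_k F\subseteq R$ for some $k$, hence $\gamma_{n+k}F\subseteq \gamma_n(R,F)$ by induction, and the Hopf-type formula $\BB_n N=(R\cap\gamma_{n+1}F)/\gamma_{n+1}(R,F)$ from Section \ref{sec_baer} then shows that the structure map $\BB_{n+k}N\to\BB_n N$ is \emph{zero} for every $n$. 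A pro-trivial tower trivially satisfies Mittag--Leffler, so ${\varprojlim}^1\,\BB_n N=0$, and combined with $\widehat N=N$ this finishes the proof via the exact sequence of Proposition \ref{prop_Z-inf_L0}. So the step you flag as ``the main obstacle'' of the algebraic route is in fact a two-line consequence of material already in the paper, and your proposed detour through the long exact sequence of $\gamma_{n+1}F_\bullet/\gamma_{n+2}F_\bullet\mono \nu_{n+1}F_\bullet\epi\nu_n F_\bullet$ is unnecessary. As for the comparison: your topological proof is legitimate and shorter to state, but it imports the full Bousfield--Kan theory of nilpotent spaces as a black box, whereas the paper's argument is self-contained given the Hopf formula and, as a bonus, exhibits the stronger fact that the Baer tower of a nilpotent group is pro-trivial, not merely Mittag--Leffler.
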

\begin{proof}
Consider a free presentation $N=F/R.$ Then $\gamma_k F\subseteq  R  $ for some $k.$ It follows that $\gamma_{n+k} F  \subseteq \gamma_n(R,F)$ for any $n.$ Therefore the map $\BB_{n+k} N\to \BB_{n}N$ is trivial for any $n$. Hence ${\varprojlim}^1\: \BB_n N=0.$
\end{proof}

\begin{Remark} The Proposition \ref{prop_nilpotent_r} can be also easily deduced from the theory of Bousfield-Kan of nilpotent spaces \cite[Ch.V.3]{BousfieldKan}.
\end{Remark}

\section{\bf Examples of 3-manifolds with isomorphic $\widehat \pi_1$ but non-isomorphic $ \ZZ_\infty \pi_1$.}

For a homeomorphism of a manifold $f:X\to X$ one can consider its mapping torus:
$$X_f= \frac{[0,1]\times X}{(1,x)\sim (0,f(x))}.$$
Then $X_f$ is also a manifold such that ${\sf dim}(X_f)={\sf dim}(X)+1.$ Moreover, there is a fiber bundle
$$X \to X_f \epi S^1 $$
with the monodromy action induced by $f.$

Consider the following matrices
$$a_k=
\left(\begin{matrix}
-1 & k \\
0 & -1
\end{matrix}
\right),
$$
where $k$ is an odd integer. They define homeomorphisms $\mathbb R^2\to \mathbb R^2,$ and thus, on the quotients $f_k:(S^1)^2\to (S^1)^2.$ Their mapping tori are 3-manifolds which will be denoted by $X_k.$ The homotopy long exact sequence of the fibration $(S^1)^2\to X_k\epi S^1$ implies that
$$X_k \simeq K(G_k,1),$$
where $G_k = \ZZ \ltimes_{a_k} \ZZ^2.$

\vbox{
\begin{Theorem}\label{th_manifolds} Let $k$ and $l$ be two odd integers.
\begin{enumerate}
\item Then $
\widehat{ \pi_1(X_k)} \cong \widehat{ \pi_1(X_l)}.$
\item If $kl \not\equiv 1,7\ ({\rm mod}\ 8),$ then $  \ZZ_\infty  \pi_1(X_k)\not \cong  \ZZ_\infty \pi_1(X_l).$
\end{enumerate}
\end{Theorem}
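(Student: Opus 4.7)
The plan is to reduce the theorem to the analogous statements for the groups $G_k$ themselves (using $X_k = K(G_k, 1)$ and the earlier proposition identifying $\pi_1(\ZZ_\infty X) \cong \ZZ_\infty \pi_1 X$) and then to work explicitly with the metabelian structure $G_k = \ZZ \ltimes_{a_k} \ZZ^2$. First I would identify $\widehat{G_k}$: since $G_k$ is metabelian with $[G_k, G_k] \subseteq \ZZ^2$, induction on $n$ gives $\gamma_{n+1}(G_k) = (a_k - I)^n \ZZ^2$ for $n \geq 1$, and using $E_{12}^2 = 0$ one computes
\[
(a_k - I)^n = \begin{pmatrix} (-2)^n & n(-2)^{n-1}k \\ 0 & (-2)^n \end{pmatrix},
\]
whose image is cofinal with the $2$-adic filtration $\{2^n \ZZ^2\}$. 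Passing to inverse limits yields $\widehat{G_k} \cong \ZZ \ltimes_{a_k} \ZZ_2^2$. Part (1) then follows because for odd $k,l$ the ratio $l/k$ is a $2$-adic unit, so $P := \mathrm{diag}(l/k, 1) \in \mathrm{GL}_2(\ZZ_2)$ satisfies $P a_k P^{-1} = a_l$ and $(t, v) \mapsto (t, Pv)$ is the required isomorphism of semidirect products.

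For part (2) the plan is to apply Corollary \ref{prop_resoluion} to an ``uncommutated'' free-by-cyclic cover $E := F_2 \rtimes \ZZ$, with $F_2 = \langle x, y\rangle$ free of rank two and $\ZZ = \langle t\rangle$ acting by $txt^{-1} = x^{-1}$, $tyt^{-1} = x^k y^{-1}$. The projection $E \twoheadrightarrow G_k$ sending $x, y$ to $e_1, e_2$ has kernel $U$ equal to the normal closure of $[x, y]$. The Lyndon-Hochschild-Serre spectral sequence for $F_2 \mono E \epi \ZZ$ gives $H_1 E \cong \ZZ \oplus \ZZ/4$ (finitely generated), and the only term potentially contributing to $H_2 E$ is $E_\infty^{1,1} = H_1(\ZZ; \ZZ^2) = \ker(a_k - I)$, which vanishes since $\det(a_k - I) = 4 \neq 0$. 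Thus $H_2 E = 0$, and Corollary \ref{prop_resoluion} delivers an exact sequence
\[
\widehat U_E \longrightarrow \widehat E \longrightarrow \ZZ_\infty G_k \longrightarrow 1,
\]
realising $\ZZ_\infty G_k$ as an explicit cokernel.

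The distinguishing invariant is extracted as follows. The pronilpotent completion $\widehat E$ is the free-by-cyclic pronilpotent group $\widehat{F_2} \rtimes \ZZ$, and modding out by $\mathrm{Im}(\widehat U_E)$ imposes $[x, y] = 1$ only up to a $\varprojlim^1 M_n G_k$-term. Consequently $\ZZ_\infty G_k \cong \ZZ \ltimes A_k$ with $A_k$ an abelian pro-$2$ group in an extension $1 \to \varprojlim^1 M_n G_k \to A_k \to \ZZ_2^2 \to 1$, carrying additional $\ZZ$-equivariant data descending from the class of $[x, y]$ in $\gamma_2 \widehat{F_2}/\gamma_3 \widehat{F_2} \cong \Lambda^2 \ZZ^2 \cong \ZZ$. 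Any isomorphism $\ZZ_\infty G_k \cong \ZZ_\infty G_l$ descends to a conjugating $P \in \mathrm{GL}_2(\ZZ_2)$ with $P a_k P^{-1} = a_l$, which forces $P$ to be upper triangular with diagonal $(\alpha, \delta)$ and $\alpha/\delta = l/k$; preservation of the $\Lambda^2$-type datum adds the constraint $\det P = \alpha\delta = \pm 1$. Combining, $l/k = \pm\alpha^2 \in \pm(\ZZ_2^\times)^2$, equivalently $kl \equiv \pm 1 \pmod 8$, i.e., $kl \equiv 1, 7 \pmod 8$; the contrapositive is exactly (2).

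The main obstacle is the final step: making rigorous the claim that the extra structure on $A_k$ forces $\det P = \pm 1$. This amounts to pinning down exactly what part of the $\Lambda^2$-class of $[x, y]$ survives in the quotient $\widehat E/\mathrm{Im}(\widehat U_E)$ and showing it is preserved only up to sign by any automorphism of the resulting group. Technically, this reduces to a careful analysis of the Lie ring of $\widehat E$ modulo the closed ideal generated by $[x, y]$ and its $t$-conjugates, and of how the resulting $\ZZ_2$-line transforms under change of generators compatible with the $t$-action.
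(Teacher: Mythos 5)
Part (1) of your proposal is correct and is essentially the paper's argument: the same computation of $b_k^n=(a_k-I)^n$ showing the filtration $\{b_k^n\ZZ^2\}$ is cofinal with $\{2^n\ZZ^2\}$, and the same kind of diagonal conjugating matrix (the paper uses $\mathrm{diag}(l,k)$, you use $\mathrm{diag}(l/k,1)$; both work over $\ZZ_2$).

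For part (2) there are two problems. First, a concrete error in your intermediate claim: for the free-by-cyclic cover $E=F_2\rtimes\ZZ$ the pronilpotent completion is \emph{not} $\widehat{F_2}\rtimes\ZZ$. One has $\widehat{E}=\ZZ\ltimes \widehat{(F_2)}_E$ with $\widehat{(F_2)}_E=\varprojlim F_2/\gamma_{n+1}(F_2,E)$, and the relative lower central series decreases much faster than $\gamma_n F_2$ because it contains commutators with $t$: already $[x,t]=x^{-2}$, so $F_2/\gamma_2(F_2,E)\cong\ZZ^2/b_k\ZZ^2\cong\ZZ/4$ and all these quotients are finite $2$-groups. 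Consequently the graded piece you want to use is not $\gamma_2\widehat{F_2}/\gamma_3\widehat{F_2}\cong\Lambda^2\ZZ^2\cong\ZZ$, and the "$\Lambda^2$-class of $[x,y]$" does not live where you place it. Moreover, with this choice of $E$ the kernel $U$ is the (huge, free) normal closure of $[x,y]$, so $\widehat{U}_E$ and its image in $\widehat{E}$ are genuinely hard to control. The paper avoids both issues by taking the class-$2$ nilpotent cover $E_k=\ZZ\ltimes_{a_k'}N$ with $N=\nu_2F(x,y)$: then $U=\langle[x,y]\rangle\cong\ZZ$ is \emph{central}, so $\widehat{U}_{E_k}=\ZZ$, one computes $H_2E_k\cong\ZZ/4$ via the LHS spectral sequence, and Corollary \ref{prop_resoluion} gives the very explicit presentation $\ZZ_\infty G_k\cong\widehat{E_k}/\langle[x,y]\rangle\cong\ZZ\ltimes_{a_k'}\mathcal N$ with $\mathcal N=(N\otimes\ZZ_2)/\langle[x,y]\rangle$ and $\gamma_\omega(\ZZ_\infty G_k)\cong\ZZ_2/\ZZ$.

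Second, and decisively, the step you yourself flag as "the main obstacle" is exactly the heart of the proof, and your proposal does not contain it. Your reduction to "$\det P=\pm1$" is the right target ($\det\varphi''=\alpha^2kl$ in the paper's notation), but asserting that "preservation of the $\Lambda^2$-type datum" forces this is not an argument. The paper closes this gap by an explicit computation in $\mathcal N$: the commutator identity $[x^s,y^t]=[x,y]^{\overline{st}}$ with values in $\ZZ_2/\ZZ$ shows that an isomorphism with $\varphi(x^s)=x^{\alpha ls}[x,y]^{u_0(s)}$, $\varphi(y^t)=x^{\beta t}y^{\alpha kt}[x,y]^{v_0(t)}$ sends $[x,y]^{\bar s}$ to $[x,y]^{\overline{\alpha^2s}\,kl}$; well-definedness on $\ZZ\subset\ZZ_2$ forces $m:=\alpha^2kl\in\ZZ$, and injectivity applied to $[x,y]^{1/m}$ forces $m=\pm1$, whence $\pm kl$ is a square of a $2$-adic unit and $kl\equiv\pm1\ (\mathrm{mod}\ 8)$. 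It is precisely the distinguished subgroup $\ZZ\subset\ZZ_2$ (equivalently, the identification $\gamma_\omega(\ZZ_\infty G_k)\cong\ZZ_2/\ZZ$ rather than $\ZZ_2$) that breaks the scaling symmetry, and this mechanism must be exhibited, not just invoked.
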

}

\

In order to prove this theorem, we need to prove several lemmas.
Further we will always assume that $k,l$ are odd integers.

\begin{Lemma}\label{lemma_G_k_compl} There is an isomorphism
$\widehat G_k\cong  \ZZ\ltimes_{a_k} \ZZ^2_2,$
where $\ZZ_2$ is the group of $2$-adic integers. Moreover, the group $\ZZ\ltimes_{a_k} (\ZZ/2^n)^2$ is nilpotent for any $n.$
\end{Lemma}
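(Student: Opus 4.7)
The plan is to compute the lower central series of $G_k$ explicitly using the matrix $B := a_k - I = -2I + N$, where $N = \left(\begin{smallmatrix} 0 & k \\ 0 & 0 \end{smallmatrix}\right)$ satisfies $N^2=0$. Writing $t$ for a generator of the $\ZZ$-factor of $G_k$ and identifying $v\in \ZZ^2$ with $(0,v)\in G_k$, the relation $tvt^{-1}=a_k v$ yields $[v,t] = -Bv$ additively, and more generally $[v,t^i] = -B\bigl(\sum_{j=0}^{i-1} a_k^j\bigr)v$ via the factorization $a_k^i-I = B(a_k^{i-1}+\cdots+I)$. Combined with $[v,w]=0$ for $v,w\in\ZZ^2$ and the fact that $G_k/\ZZ^2$ is abelian, an induction on $m$ gives $\gamma_{m+1}G_k = B^m\ZZ^2$: the inclusion $\subseteq$ uses the formula above applied to $u\in B^{m-1}\ZZ^2$, and the reverse inclusion follows from $[B^{m-1}v,t]=-B^m v$.

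Because $N^2=0$, the binomial theorem collapses to $B^m = (-2)^m I + m(-2)^{m-1}N$, so the entries of $B^m$ have $2$-adic valuations $m$ and $v_2(m)+m-1$, both $\geq m-1$. Exactly the same commutator analysis over the coefficient ring $\ZZ/2^n$ yields $\gamma_{m+1}\bigl(\ZZ\ltimes_{a_k}(\ZZ/2^n)^2\bigr) = B^m(\ZZ/2^n)^2$, which vanishes once $m\geq n+1$. This proves that $\ZZ\ltimes_{a_k}(\ZZ/2^n)^2$ is nilpotent of class at most $n+1$.

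For the isomorphism $\widehat{G_k}\cong \ZZ\ltimes_{a_k}\ZZ_2^2$, observe that $G_k/\gamma_{m+1}G_k = \ZZ\ltimes (\ZZ^2/B^m\ZZ^2)$, while $\ZZ\ltimes_{a_k}\ZZ_2^2 = \varprojlim\ \ZZ\ltimes_{a_k}(\ZZ^2/2^n\ZZ^2)$ (the $\ZZ$-factor is constant and the $\varprojlim^1$ of the surjective tower on the abelian kernel vanishes). It therefore suffices to check that the two descending filtrations $\{B^m\ZZ^2\}_m$ and $\{2^n\ZZ^2\}_n$ of $\ZZ^2$ are mutually cofinal. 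In one direction the valuation bound gives $B^m\ZZ^2\subseteq 2^{m-1}\ZZ^2$; in the other, the adjugate identity $B^m\cdot\operatorname{adj}(B^m) = \det(B^m)\cdot I = 4^m I$ produces $4^m\ZZ^2\subseteq B^m\ZZ^2$. Passing to inverse limits then yields the desired isomorphism, with the semidirect product structure preserved because the $\ZZ$-factor descends compatibly through the tower.

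The only delicate step is the inductive identification $\gamma_{m+1}G_k = B^m\ZZ^2$, which relies on a careful commutator calculation in the semidirect product; once that is in hand, both claims of the lemma reduce to elementary $2$-adic arithmetic and I foresee no further obstacle.
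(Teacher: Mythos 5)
Your proof is correct and follows essentially the same route as the paper: identify $\gamma_{m+1}G_k$ with $b^m(\ZZ^2)$ for $b=a_k-I$, compute $b^m$ explicitly using the nilpotence of the off-diagonal part, and show the filtration $\{b^m\ZZ^2\}$ is cofinal with $\{2^n\ZZ^2\}$ via the valuation bound in one direction and the determinant/adjugate identity (the paper writes out the adjugate matrix explicitly) in the other. The same computation over $\ZZ/2^n$ gives the nilpotence claim, exactly as in the paper.
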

\begin{proof}
It is easy to check that for arbitrary matrix $a\in {\rm GL}_s(\ZZ)$ we have $\gamma_{n+1}(\ZZ\ltimes_a \ZZ^s)= 0\ltimes b^n(\ZZ^s),$ where $b=a-1$ and $n\geq 1.$ Set $b_k=a_k-1.$ Then $\gamma_{n+1}(G_k)=0\ltimes b_k^n(\ZZ^2).$  By induction we prove that
$$b_k^n= \left(
\begin{matrix}
(-2)^n & (-2)^{n-1} nk \\
0 & (-2)^n
\end{matrix}\right).
 $$
 Hence $b_k^n(\ZZ^2) \subseteq  2^{n-1}\ZZ^2.$  On the other hand
 $$b_k^n \cdot  \left(
\begin{matrix}
(-2)^n & -(-2)^{n-1} nk \\
0 & (-2)^n
\end{matrix}\right) = \left(
\begin{matrix}
(-2)^{n^2} &  0 \\
0 & (-2)^{n^2}
\end{matrix}\right). $$
This implies that $2^{n^2}\ZZ^2 \subseteq b_k^n(\ZZ^2).$ Therefore the filtrations $b^n_k(\ZZ^2)$ and $2^n\ZZ^2$ of $\ZZ^2$ are equivalent.  It follows that $\widehat G_k=\ZZ\ltimes_{a_k} \ZZ^2_2$ and $\ZZ\ltimes_{a_k} (\ZZ/2^n)^2$ is nilpotent for any $n.$
\end{proof}

\begin{Lemma}\label{lemma_G_k_iso} There  is an isomorphism $\widehat G_k\cong \widehat G_l.$
\end{Lemma}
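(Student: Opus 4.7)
By Lemma \ref{lemma_G_k_compl} it suffices to construct an isomorphism
$\mathbb Z\ltimes_{a_k}\ZZ^2_2\cong \mathbb Z\ltimes_{a_l}\ZZ^2_2$ that is the identity on the $\mathbb Z$-factor. Any $P\in GL_2(\ZZ_2)$ satisfying $Pa_kP^{-1}=a_l$ yields such an isomorphism by $(n,v)\mapsto(n,Pv)$: a direct check shows this is multiplicative since, for the canonical actions by powers of $a_k$ and $a_l$, the conjugation identity $Pa_k^nP^{-1}=a_l^n$ is automatic. So the task is to find a single matrix $P\in GL_2(\ZZ_2)$ conjugating $a_k$ to $a_l$.

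Write $a_k=-I+kN$, where $N=\left(\begin{smallmatrix}0&1\\0&0\end{smallmatrix}\right)$, and similarly for $a_l$. The conjugation condition $Pa_kP^{-1}=a_l$ becomes $kPNP^{-1}=lN$, that is,
\[
PNP^{-1}=\tfrac{l}{k}\,N.
\]
This is the step where oddness of $k$ is used in an essential way: since $k$ is odd, it is a unit in $\ZZ_2$, so $l/k\in\ZZ_2^\times$.

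An explicit candidate is $P=\left(\begin{smallmatrix}l/k&0\\0&1\end{smallmatrix}\right)\in GL_2(\ZZ_2)$; a one-line computation gives $PN=(l/k)NP$, hence the required identity. This completes the construction of the isomorphism $\widehat G_k\cong\widehat G_l$.

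I do not expect any serious obstacle. The conceptual content is simply that the matrices $a_k$ and $a_l$, which are not conjugate in $GL_2(\ZZ)$ for distinct odd $k,l$, become conjugate in $GL_2(\ZZ_2)$ because the $2$-adic completion turns every odd integer into a unit; this is precisely what makes the pronilpotent completion insensitive to the odd parameter $k$.
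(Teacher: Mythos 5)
Your proof is correct and takes essentially the same approach as the paper: both reduce to finding a matrix in ${\rm GL}_2(\ZZ_2)$ intertwining $a_k$ and $a_l$ and then forming $1\ltimes\varphi$, using that odd integers are units in $\ZZ_2$. Your matrix $P=\left(\begin{smallmatrix}l/k&0\\0&1\end{smallmatrix}\right)$ is just the scalar multiple $\tfrac1k\left(\begin{smallmatrix}l&0\\0&k\end{smallmatrix}\right)$ of the one the paper uses, so the two intertwiners are essentially identical.
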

\begin{proof}
Generally, if we have two groups $\ZZ\ltimes_{f_1} H_1$ and $\ZZ\ltimes_{f_2} H_2$ where $f_i\in {\sf Aut}(H_i)$, then a homomorphism $\varphi:H_1\to H_2$ satisfying $\varphi f_1=f_2 \varphi$ defines a homomorphism $1\ltimes \varphi: \ZZ\ltimes_{f_1} H_1\to \ZZ\ltimes_{f_2} H_2.$  We take $\varphi = \left(
\begin{matrix}
l &  0 \\
0 & k
\end{matrix}\right)$ as a matrix in ${\rm GL}_2(\ZZ_2).$ Here we use that odd numbers are invertible in $\ZZ_2.$ A direct computation shows that
$a_l \varphi = \varphi a_k.$
Therefore $\varphi$ defines an isomorphism $1\ltimes \varphi : \ZZ\ltimes_{a_k} \ZZ_2^2 \to \ZZ\ltimes_{a_l} \ZZ_2^2. $ Thus $\widehat G_k\cong \widehat G_l.$
\end{proof}

Let $F(x,y)$ be the free group generated by $x$ and $y.$ We denote by $N$ the free 2-generated nilpotent group of class $2$. In other words $N=\nu_2 F(x,y).$ The images of $x$ and $y$ in $N$ will be denoted by the same letter. It has the following presentation
$$N=\langle x,y \mid [x,y,y]=[x,y,x]=1 \rangle,
$$
where the triple commutator is defined as $[a,b,c]=[[a,b],c].$
There exists a unique automorphism of $F(x,y)$ such that $x\mapsto x^{-1}$ and $y\mapsto x^{k}y^{-1}.$ It induces an automorphism
$$a'_k:N\to N, \hspace{1cm} a'_k(x)=x^{-1}, \ a_k'(y)=x^{k}y^{-1}.$$ It lifts the automorphism $a_k:\ZZ^2\to \ZZ^2.$ Note that $a_k'([x,y])=[x,y].$
Define the following group
$$E_k = \ZZ \ltimes_{a'_k} N.$$
Then $(0,[x,y]) $ is in the center of this group. Therefore we have a central extension
\begin{equation}\label{eq_Ek_central_ext}
 1\longrightarrow \ZZ \longrightarrow E_k \longrightarrow G_k \longrightarrow 1.
\end{equation}
It is easy to check that $E_k$ has a following presentation

\begin{equation} \label{eq_pres_e_k}
E_k=\langle a, x,y \mid x^a=x^{-1},\  y^a=x^{k}y^{-1},\ [x,y,y]=[x,y,x]=1  \rangle.
\end{equation}

We will denote the images of $x,y$ in $\widehat  E_k$ by the same letters.

\begin{Lemma}\label{lemma_E_k}
There is an isomorphism $H_2 E_k\cong \ZZ/4,$ an isomorphism $\ZZ_\infty E_k  \cong  \widehat E_k$ and an exact sequence
$$ \ZZ \overset{[x,y]}\longrightarrow \widehat E_k \longrightarrow  \ZZ_\infty G_k \longrightarrow 1.$$
\end{Lemma}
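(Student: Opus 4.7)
The plan is to prove the three assertions in order. For the second one, once $H_2 E_k$ is shown to be finite, Proposition \ref{prop_f.p.H2} applies directly: abelianizing the presentation \eqref{eq_pres_e_k} gives $H_1 E_k \cong \ZZ \oplus \ZZ/4$, which is finitely generated. For the third assertion, I would then invoke Corollary \ref{prop_resoluion} applied to the central extension \eqref{eq_Ek_central_ext} $\ZZ \mono E_k \epi G_k$: since $\ZZ$ is central in $E_k$ we have $\gamma_n(\ZZ, E_k) = 0$ for all $n \geq 2$, hence $\widehat\ZZ_{E_k} = \ZZ$, and the natural map $\widehat\ZZ_{E_k} \to \widehat E_k$ sends the generator to $[x, y]$. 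So the bulk of the work is the computation $H_2 E_k \cong \ZZ/4$.

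For this, I would apply the Hochschild--Serre spectral sequence to the split extension $1 \to N \to E_k \to \ZZ \to 1$. Since $\ZZ$ has cohomological dimension one, only the columns $p = 0, 1$ are nonzero, so the spectral sequence collapses at $E_2$ and yields
$$0 \longrightarrow H_2(N)_\ZZ \longrightarrow H_2 E_k \longrightarrow H_1(N)^\ZZ \longrightarrow 0.$$
The invariants term vanishes: $H_1 N = N^{ab} = \ZZ^2$ carries the action $a_k$, and $\det(a_k - I) = 4 \neq 0$ so $\ker(a_k - I) = 0$. To compute $H_2(N)_\ZZ$, I would first compute $H_2 N$ via the Hochschild--Serre spectral sequence for the central extension $Z(N) = \langle [x, y] \rangle \cong \ZZ \mono N \epi \ZZ^2$: its extension class is a generator of $H^2(\ZZ^2, \ZZ) \cong \ZZ$, hence $d_2^{2, 0}$ is an isomorphism and $H_2 N \cong E_\infty^{1, 1} \cong N^{ab} \otimes Z(N) \cong \ZZ^2$. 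A direct commutator calculation in $N$ gives $a_k'([x, y]) = [x^{-1}, x^k y^{-1}] = [x, y]$, so $a$ acts trivially on $Z(N)$; by naturality of the spectral sequence the induced action of $a$ on $H_2 N \cong N^{ab} \otimes Z(N)$ is $a_k \otimes 1 = a_k$. The same Smith normal form calculation already used for $H_1 G_k$ then gives $H_2(N)_\ZZ \cong \ZZ^2 / (a_k - I) \ZZ^2 \cong \ZZ/4$.

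The main obstacle I expect is identifying the $\ZZ$-action on $H_2 N$: one must invoke naturality of the Lyndon--Hochschild--Serre spectral sequence to transport the known actions on $N^{ab}$ and $Z(N) = [N, N]$ (the latter being characteristic in $N$) to the induced action on $H_2 N$, and verify the commutator identity $a_k'([x, y]) = [x, y]$ using that $N$ is two-step nilpotent. Once this is settled, the remaining deductions are routine invocations of Proposition \ref{prop_f.p.H2} and Corollary \ref{prop_resoluion}.
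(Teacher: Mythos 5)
Your proof is correct, and its skeleton coincides with the paper's: the Hochschild--Serre spectral sequence of $N \mono E_k \epi \ZZ$ reduces everything to the short exact sequence $1 \to (H_2N)_\ZZ \to H_2E_k \to (H_1N)^\ZZ \to 1$; the invariants vanish because $\det(a_k-1)=4\neq 0$; and the last two assertions follow from Proposition \ref{prop_f.p.H2} and Corollary \ref{prop_resoluion} exactly as you say, including the observation that centrality of $\ZZ$ in \eqref{eq_Ek_central_ext} forces $\widehat{\ZZ}_{E_k}=\ZZ$ mapping to $\widehat E_k$ via $[x,y]$. The one place where you genuinely diverge is the computation of $H_2N$ together with its $\ZZ$-action, which is the real content of the lemma. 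The paper computes $H_2N$ by Hopf's formula as $\gamma_3F/\gamma_4F$ with basis $[x,y,x],[x,y,y]$, lifts $a_k'$ to the free group, and evaluates the induced map by an explicit calculation in the graded Lie ring ${\sf gr}(F)$, arriving at the matrix $a_k$. You instead run the spectral sequence of the central extension $Z(N)\mono N\epi\ZZ^2$: since $H_1N\to H_1\ZZ^2$ is an isomorphism, the five-term exact sequence shows the transgression $H_2(\ZZ^2)\to Z(N)$ is onto, hence an isomorphism of infinite cyclic groups (equivalently, the extension class generates $H^2(\ZZ^2,\ZZ)$), so $E^\infty_{2,0}=0$ and the filtration collapses to a natural isomorphism $H_2N\cong E^\infty_{1,1}\cong N^{ab}\otimes Z(N)$, on which $a_k'$ acts as $a_k\otimes 1$ because $Z(N)=[N,N]$ is characteristic and $a_k'([x,y])=[x,y]$. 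Both arguments produce a $\ZZ$-module isomorphic to $\ZZ^2$ with the generator acting by (a conjugate of) $a_k$, hence the same coinvariants $\ZZ^2/(a_k-1)\ZZ^2\cong\ZZ/4$ via the Smith normal form. Your route trades the commutator calculus for the (standard but worth stating) naturality of the identification $H_2N\cong E^\infty_{1,1}$ with respect to the morphism of extensions induced by $a_k'$; the paper's route is more elementary and exhibits an explicit basis of $H_2N$, which is however not needed since only the isomorphism class of the $\ZZ$-module $H_2N$ enters the final coinvariant computation.
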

\begin{Remark}
The group $E_k$ is ``better'' than the group $G_k$ because its second homology group is finite, and hence, its right exact completion equals to its usual completion (Proposition \ref{prop_f.p.H2}). We think about $E_k$ as about a resolution of $G_k.$ Recall that, the same type of construction is used in \cite{IM}, in order to prove that the $H{\mathbb Z}$-length of a free noncyclic group is $\geq \omega+2$.
\end{Remark}
\begin{proof}[Proof of Lemma \ref{lemma_E_k}]
 The spectral sequence of the extension $N\mono  E_k \epi \ZZ$ gives a short exact sequence
$$ 1\longrightarrow (H_2 N)_\ZZ \longrightarrow  H_2 E_k \longrightarrow  (H_1 N)^\ZZ \longrightarrow 1.$$
There is an isomorphism $H_1N=\ZZ^2,$ where $\ZZ$ acts on $\ZZ^2$ by $a_k.$ Hence $(H_1N)^\ZZ=0$ and $H_2E_k=(H_2N)_\ZZ.$
Hopf's formula gives an isomorphism $H_2N=\gamma_3 F /\gamma_4 F ,$ where $F=F(x,y)$ is the free group. The two elements $[x,y,y],[x,y,x]$ form a basis of $H_2 N\cong \ZZ^2.$ Hopf's formula is natural in the following sense: if we have two presentations of two groups $G=F/R$ and $G'=F'/R'$ and a homomorphism $\varphi: G\to G'$ then any lifting of the homomorphism to the free groups $F\to F' $ induces the  homomorphism $(R\cap [F,F])/ [R,F] \to (R'\cap [F',F'])/ [R',F']$ corresponding to $\varphi_*:H_2 G\to H_2 G'.$    Then the action of $\ZZ$ on $H_2 N$ is the following: $[x,y,x ] \mapsto [x^{-1},x^{k}y^{-1},x^{-1}],$  and
$ [x,y,y] \mapsto [x^{-1},x^{k} y^{-1},x^{k}y^{-1}].$ Let us treat the elements $[x^{-1},x^{k}y^{-1},x^{-1}]$ and $[x^{-1},x^{k} y^{-1},x^{k}y^{-1}]$ as elements of the Lie ring ${\sf gr} (F)=\bigoplus \gamma_n F /\gamma_{n+1}F$ with the additive notation.
Then, using the fact that  ${\sf gr} (F)$ is a Lie ring, where sum is induced by the product and the bracket is induced by the commutator, we obtain
\begin{equation*}
\begin{split}
[x^{-1},x^{k}y^{-1},x^{-1}]&=[-x,kx-y,-x]=[-x,kx,-x]+[-x,-y,-x]=-[x,y,x],\\
[x^{-1},x^{k} y^{-1},x^{k}y^{-1}]&=[-x,kx-y,kx-y]=[-[x,kx]+[x,y],kx-y]=\\
&=[x,y,kx-y]=k[x,y,x] -[x,y,y].
\end{split}
\end{equation*}
Then the action of $\ZZ$ on $H_2N$ is the following
\begin{equation}
\begin{split}
[x,y,x] & \mapsto\ \: -[x,y,x], \\
[x,y,y] & \mapsto k\cdot [x,y,x] - [x,y,y].
\end{split}
\end{equation}
It follows that $\ZZ$ acts on $H_2N$ by the matrix $a_k.$ Then $(H_2N)_\ZZ$ is isomorphic to the quotient $\ZZ^2/b_k(\ZZ^2),$ where $b_k=a_k-1.$  Computing the Smith normal form of the matrix $b_k=\left( \begin{smallmatrix}
-2 & k \\
0 & -2
\end{smallmatrix} \right) $
we obtain the matrix
$ \left( \begin{smallmatrix}
4 & 0 \\
0 & 1
\end{smallmatrix} \right)  .$
Therefore $H_2E_k=(H_2N)_\ZZ=\ZZ/4.$ Proposition \ref{prop_f.p.H2} implies that $\ZZ_\infty E_k \cong \widehat E_k.$
Then the central extension \eqref{eq_Ek_central_ext} together with Corollary \ref{prop_resoluion} give the exact sequence $\ZZ\to \widehat E_k \to \ZZ_\infty G_k \to 1. $
 \end{proof}

Note that $[N,N]$ is the cyclic group generated by $[x,y].$ Hence,
 there is a central extension
\begin{equation}\label{eq_N_short_exact}
1\longrightarrow \ZZ \longrightarrow N \longrightarrow  \ZZ^2 \longrightarrow 1.
\end{equation}
Any element of $N$ can be uniquely presented as $x^s y^t [x,y]^u.$ Note that for any $s,t\in \ZZ$ the following holds in $N$
$$[x^s,y^t]=[x,y]^{st}.$$ 
The product in $N$ can be defined by the formula
\begin{equation} \label{eq_prod_formula}
(x^s y^t [x,y]^u)(x^{s'} y^{t'} [x,y]^{u'})=x^{s+s'} y^{t+t'} [x,y]^{u+u'-ts'}.
\end{equation}
This means that the extension \eqref{eq_N_short_exact} can be defined by the $2$-cocycle $\alpha:\ZZ^2 \times \ZZ^2 \to \ZZ$ given by
$$\alpha( ( s,t) , (s',t') )=-ts'.$$

Let $R$ be a ring.
We denote by $N\otimes R$ the group of formal expressions $x^s y^t [x,y]^u$ where  $s,t,u\in R,$ and the multiplication is defined by the formula   \eqref{eq_prod_formula}. This group can be obtained as the group corresponding  to the $2$-cocycle $ \alpha_R:R^2\times R^2\to R$ given by the same formula $\alpha_R((s,t),(s',t'))=-ts'$
$$1 \longrightarrow R \longrightarrow N\otimes R \longrightarrow R^2 \longrightarrow 1.$$
In order to prove that this construction gives a well-defined group $N\otimes R$ one just needs  to check that $ \alpha_R$ is a $2$-cocycle, which can be done by a direct computation.

This construction is natural by $R.$ In particular, the ring homomorphism $\ZZ\to R$ gives a group homomorphism
$N\to N\otimes R.$

Let's describe the action of $a_k'$ on $N$ more explicitly. It is easy to check that
$$a_k'(y^t)=(x^ky^{-1})^t= x^{kt} y^{-t} [x,y]^{kt(t-1)/2 }$$
and $a_k'([x,y])=[x,y].$ Here we use the formula that holds in $N$ 
$$[x^{-1},y^{-1}]=[x,y]^{(-1)(-1)}=[x,y].$$
It follows that
$$a_k'(x^sy^t[x,y]^u) = x^{-s+kt} y^{-t} [x,y]^{u+kt(t-1)/2}.$$
It is easy to see that the same formula defines an automorphism of $N\otimes \ZZ_2:$
$$a_{k,\ZZ_2}' :N\otimes \ZZ_2 \longrightarrow N\otimes \ZZ_2$$
because elements of the form $t(t-1)$ are uniquely divisible by $2$ in $\ZZ_2.$
For simplicity we will denote  $a'_k=a'_{k,\ZZ_2}.$

\begin{Lemma} The obvious map $E_k \to \ZZ\ltimes_{a_k'} (N\otimes \ZZ_2)$ induces an isomorphism
$$ \widehat E_k \cong \ZZ\ltimes_{a'_k} (N\otimes \ZZ_2)  $$
and there are short exact sequences
$$
\begin{tikzcd}
0 \arrow[r] & \ZZ\arrow[r,"{[x,y]}"] \arrow[d] & \widehat E_k \arrow[r] \arrow[d,equal] & \ZZ_\infty G_k \arrow[r] \arrow[d] & 1
\\
0 \arrow[r] & \ZZ_2 \arrow[r] & \widehat E_k \arrow[r] & \widehat G_k \arrow[r] & 1
\end{tikzcd}
$$
 Moreover, the group $\ZZ\ltimes_{a_k'} (N\otimes \ZZ/2^n)$ is nilpotent for any $n.$
\end{Lemma}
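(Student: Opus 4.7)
The plan is to identify $\widehat E_k$ with $\ZZ\ltimes_{a_k'}(N\otimes \ZZ_2)$ by a filtration cofinality argument on the $N$-factor, then read off both short exact sequences from this explicit description. First I would use the formula $\widehat{G\ltimes U}=\widehat G\ltimes \widehat U_G$ recorded before Proposition \ref{prop_resoluion0}, together with $\widehat\ZZ=\ZZ$ (which holds by Proposition \ref{prop_nilpotent_r} since $\ZZ$ is nilpotent), to deduce $\widehat E_k\cong \ZZ\ltimes \widehat N_{E_k}$, where $\widehat N_{E_k}=\varprojlim_n N/\gamma_{n+1}(N,E_k)$. The main task then becomes producing a $\ZZ$-equivariant isomorphism $\widehat N_{E_k}\cong N\otimes \ZZ_2$.

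For the final nilpotency claim I would observe that $a_k'$ fixes $[x,y]$, so the subgroup generated by $[x,y]$ is central in $\ZZ\ltimes_{a_k'}(N\otimes \ZZ/2^n)$, producing a central extension
$$1\longrightarrow \ZZ/2^n\longrightarrow \ZZ\ltimes_{a_k'}(N\otimes \ZZ/2^n)\longrightarrow \ZZ\ltimes_{a_k}(\ZZ/2^n)^2\longrightarrow 1.$$
The quotient is nilpotent by the same argument as in Lemma \ref{lemma_G_k_compl} applied modulo $2^n$: the filtrations $b_k^m(\ZZ/2^n)^2$ and $2^m(\ZZ/2^n)^2$ remain equivalent and both vanish for $m$ large. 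A central extension of a nilpotent group is nilpotent, so $\ZZ\ltimes_{a_k'}(N\otimes \ZZ/2^n)$ is nilpotent.

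This nilpotency guarantees that the canonical map $E_k\to \ZZ\ltimes_{a_k'}(N\otimes \ZZ/2^n)$ factors through $\widehat E_k$, so passing to the inverse limit gives a natural map $\widehat E_k\to \ZZ\ltimes_{a_k'}(N\otimes \ZZ_2)$. To show it is an isomorphism, I plan to compare two filtrations on $N$: the relative lower central series $F_n=\gamma_{n+1}(N,E_k)$, and $K_n=\ker(N\to N\otimes \ZZ/2^n)$. The nilpotency above gives $F_{m(n)}\subseteq K_n$ for some $m(n)$. For the reverse, the relations $x^a=x^{-1}$, $y^a=x^ky^{-1}$ yield $[x,a]=x^{-2}$ and (by a direct product computation in $N$) $[y,a]=x^ky^{-2}[x,y]^k$, from which an induction in the style of Lemma \ref{lemma_G_k_compl} shows $K_{cn}\subseteq F_n$ for some constant $c$. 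Cofinality of the two filtrations then identifies $\widehat N_{E_k}$ with $N\otimes \ZZ_2$ as $\ZZ$-groups.

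Once the isomorphism $\widehat E_k\cong \ZZ\ltimes_{a_k'}(N\otimes \ZZ_2)$ is in place, the bottom short exact sequence follows from the central extension $0\to \ZZ_2\to N\otimes \ZZ_2\to \ZZ_2^2\to 0$, the $\ZZ$-invariance of the central $\ZZ_2$, and the identification $\ZZ\ltimes_{a_k}\ZZ_2^2=\widehat G_k$ from Lemma \ref{lemma_G_k_compl}. The top short exact sequence upgrades the exact sequence of Lemma \ref{lemma_E_k} by observing that $[x,y]$ generates the $\ZZ_2$-center of $\widehat E_k$ and therefore has infinite order, so the map $\ZZ\to \widehat E_k$ is injective. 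The main obstacle in this plan is the filtration comparison, which requires careful bookkeeping of commutators through the Heisenberg-type relations in $N$ to establish the inclusion $K_{cn}\subseteq F_n$.
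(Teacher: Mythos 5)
Your architecture is sound and close to the paper's: reduce to the $N$-factor, use nilpotency of finite quotients for one inclusion, and read off the exact sequences at the end. But there are two genuine problems at the crux. First, your filtration $K_n=\ker(N\to N\otimes \ZZ/2^n)$ is \emph{not} $a_k'$-stable: $a_k'(y^{2^n})=x^{k2^n}y^{-2^n}[x,y]^{k2^{n-1}(2^n-1)}$, and the $[x,y]$-exponent has $2$-adic valuation only $n-1$, so $a_k'(K_n)\not\subseteq K_n$ and the action does not descend to $N\otimes\ZZ/2^n$. Consequently neither your central extension $1\to\ZZ/2^n\to\ZZ\ltimes_{a_k'}(N\otimes\ZZ/2^n)\to\ZZ\ltimes_{a_k}(\ZZ/2^n)^2\to1$ nor the tower defining your map $\widehat E_k\to\ZZ\ltimes_{a_k'}(N\otimes\ZZ_2)$ exists as written. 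The paper sidesteps this by replacing $K_n$ with the asymmetric subgroups $\mathcal E_n=\{x^sy^t[x,y]^u\mid s,t\in 2^{n+1}\ZZ,\ u\in 2^n\ZZ\}$, which \emph{are} $a_k'$-stable (since $t\in 2^{n+1}\ZZ$ forces $t(t-1)/2\in 2^n\ZZ$) and still have $\varprojlim N/\mathcal E_n\cong N\otimes\ZZ_2$. You need this (or an equivalent) correction before anything else goes through.

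Second, your reverse inclusion $K_{cn}\subseteq F_n$ is the entire content of the isomorphism and is only asserted. It is in fact true (with $c=2$, say), but the ``induction in the style of Lemma \ref{lemma_G_k_compl}'' you invoke does not suffice, because the iterated commutators $[y,a,\dots,a]$ have the form $x^{\alpha_j}y^{(-2)^j}[x,y]^{\beta_j}$ with $\beta_j$ \emph{odd} ($\beta_1=k$, $\beta_2=11k$), so stripping off the $x$-part leaves $y^{(-2)^j}[x,y]^{\beta_j}\in F_n$ with an odd central discrepancy; one must then raise to a further power $2^{m-j}$ and separately know that $[x,y]^{2^{i}}\in\gamma_{i+2}(N,E_k)$ --- which is exactly the paper's key identity $[x,\underbrace{a,\dots,a}_n,y]=[x,y]^{(-2)^n}$. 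Note that the paper never proves full cofinality of the two filtrations of $N$: it proves only the inclusion $\gamma_{m(n)}\mathcal E\subseteq\mathcal E_n$ (from nilpotency) together with cofinality of the two induced filtrations of the central subgroup $[N,N]\cong\ZZ$, and then concludes by the five-lemma for inverse limits of towers of epimorphisms (\cite[Ch.IX, Prop.~2.3, 2.4]{BousfieldKan}), feeding in Lemma \ref{lemma_G_k_compl} for the quotient tower. That route requires strictly less computation than the one you propose; if you insist on full cofinality you must actually carry out the $y$-direction and central bookkeeping above. The remaining parts of your proposal (the reduction $\widehat{E_k}\cong\ZZ\ltimes\widehat N_{E_k}$, nilpotency via a central extension, and the derivation of the two short exact sequences from Lemma \ref{lemma_E_k} and the infinite order of $[x,y]$) are correct and match the paper.
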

\begin{proof} Set $\mathcal E=E_k.$
Consider a subgroup of
$\mathcal E_n \subseteq N $
$$\mathcal E_n = \{ x^sy^t[x,y]^u \mid s,t \in 2^{n+1}\ZZ, u\in 2^{n}\ZZ\}.$$
Since $t\in 2^{n+1}\ZZ$ implies that $t(t-1)/2 \in 2^n\ZZ,$ the explicit formula for $a'_k$ gives that
$$a'_k( \mathcal E_n ) \subseteq \mathcal E_n.$$ Therefore $\mathcal E_n=0\ltimes \mathcal E_n$ is a normal subgroup of $\mathcal E.$
It is easy to check that $N\otimes \ZZ_2\cong \varprojlim N/\mathcal E_n.$ Then $ \ZZ\ltimes_{a_k'} (N\otimes \ZZ_2) \cong \varprojlim\:  \mathcal E/ \mathcal E_n.$
We also set $\mathcal G=G_k$ and $\mathcal G_n= \ZZ\ltimes_{a_k} 2^{n+1} \ZZ^2.$
Note that there is a central extension
$$ 1\longrightarrow \ZZ/2^n \longrightarrow \mathcal E/\mathcal  E_n  \longrightarrow \mathcal G/\mathcal G_{n} \longrightarrow 1. $$
By Lemma \ref{lemma_G_k_compl} $\mathcal G/\mathcal G_n $ is nilpotent, and hence, $\mathcal E / \mathcal E_n$ is nilpotent.  It  follows that for any $n$ there exists $m(n)$ such that $\gamma_{m(n)} \mathcal E \subseteq  \mathcal E_n.$ Hence we have the following diagram
$$
\begin{tikzcd}
1\arrow[r]
&
{[N,N]}/({[N,N]}\cap \gamma_{m(n)} \mathcal E)\arrow[d] \arrow[r]
 &
  \mathcal E/\gamma_{m(n)} \mathcal E \arrow[r]  \arrow[d]
  &
  \mathcal G/\gamma_{m(n)} \mathcal  G \arrow[r] \arrow[d]
  &
  1
 \\
1\arrow[r] & \ZZ/2^n \arrow[r] & \mathcal E/\mathcal E_n \arrow[r] & \mathcal G/\mathcal G_n \arrow[r] & 1
\end{tikzcd}
 $$
We want to prove that the central map induce an isomorphism on limits. By Lemma \ref{lemma_G_k_compl} the right hand vertical map induces an isomorphism on limits. Since the towers consist of epimorphisms we only need to prove that the left hand map induces an isomorphism on limits (\cite[Ch.IX, Prop. 2.3, Prop. 2.4]{BousfieldKan}). It is enough to prove that the filtrations $[N,N]\cap \gamma_n\mathcal E $ and  $\langle [x,y]^{2^n} \rangle $ of the cyclic group $[N,N]=\langle [x,y] \rangle$ are equivalent. We already know that $N\cap \gamma_{m(n)} \mathcal E \subseteq \langle [x,y]^{2^n} \rangle  $ because the left hand vertical map is well defined. On the other hand, if we use the presentation   $\mathcal E= \langle x,y,a\mid x^a=x^{-1}, y^a=x^ky^{-1}, [x,y,y]=[x,y,x]=1   \rangle$ (see \eqref{eq_pres_e_k}) we obtain
$$[x, \underbrace{ a,\dots,a}_n,y]=[x,y]^{(-2)^n} \in [N,N] \cap \gamma_{n+2} \mathcal E. $$
Hence $\langle [x,y]^{2^n} \rangle\subseteq [N,N] \cap \gamma_{n+2} \mathcal E.$
It follows that $\widehat{\mathcal E}= \ZZ\ltimes_{a_k'} (N\otimes \ZZ_2).$

The short exact sequences follow from the description of $\widehat E_k$ and Lemma \ref{lemma_E_k}.
\end{proof}

Denote by $\mathcal N$ the quotient of $N\otimes \ZZ_2$ by the subgroup $[N,N]=\{[x,y]^n\mid n\in \ZZ\}.$ Then any element of the group $\mathcal N$ has the form $x^sy^y [x,y]^u,$ where $s,t\in \ZZ_2$ and $u\in \ZZ_2/\ZZ:$
$$\mathcal N= (N\otimes \ZZ_2)/\langle [x,y] \rangle  ,\hspace{1cm} \mathcal N=\{ x^sy^y [x,y]^u\mid  s,t\in \ZZ_2, u\in \ZZ_2/\ZZ\}.$$ The product of elements in $\mathcal N$ is given by
$$x^{s}y^{t} [x,y]^{u} \ \cdot \ x^{s'}y^{t'} [x,y]^{u'}=x^{s+s'} y^{t+t'} [x,y]^{u+u'-\overline{ts'}}, $$
where $\overline{ts'}$ is the image of $ts'$ in $\ZZ_2/\ZZ.$ In particular, for all $s,t\in \ZZ_2$ we have
$$[x^s,y^t]=[x,y]^{ \overline{st} }.$$

\begin{Corollary}\label{cor_4.7} There is an isomorphism $$\ZZ_\infty G_k \cong \ZZ\ltimes_{a'_k} \mathcal N.$$
Moreover
$$\gamma_\omega(\ZZ_\infty G_k)\cong \ZZ_2/\ZZ, \hspace{1cm} \nu_\omega ( \ZZ_\infty G_k) \cong \widehat G_k.$$
\end{Corollary}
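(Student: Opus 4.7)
The plan is to derive all three statements directly from the previous lemma together with Proposition \ref{prop_gamma_omega}, without any further substantive calculation.

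First, to obtain the semidirect product description of $\ZZ_\infty G_k$, I would use that the previous lemma already provides $\widehat E_k \cong \ZZ \ltimes_{a'_k} (N\otimes \ZZ_2)$ together with the exact sequence $\ZZ \xrightarrow{[x,y]} \widehat E_k \to \ZZ_\infty G_k \to 1$. The element $[x,y]$ lies in the center of $N\otimes \ZZ_2$ (the explicit multiplication formula makes it central) and is fixed by $a'_k$, so the image of $\ZZ$ in $\widehat E_k$ is the central cyclic subgroup $\{0\}\ltimes \langle [x,y]\rangle$. Quotienting $\widehat E_k$ by this subgroup and using the definition $\mathcal N = (N\otimes \ZZ_2)/\langle [x,y]\rangle$ directly yields $\ZZ_\infty G_k \cong \ZZ \ltimes_{a'_k} \mathcal N$.

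Second, to invoke Proposition \ref{prop_gamma_omega}, I would verify that $H_1 G_k$ is finitely generated by abelianizing the semidirect product: $H_1 G_k \cong \ZZ \oplus \ZZ^2/(a_k-1)\ZZ^2 \cong \ZZ \oplus \ZZ/4$, using the Smith normal form of $b_k$ already recorded in Lemma \ref{lemma_E_k}. Proposition \ref{prop_gamma_omega} then immediately gives $\nu_\omega(\ZZ_\infty G_k) \cong \widehat G_k$, and identifies $\gamma_\omega(\ZZ_\infty G_k)$ as the kernel of the projection $\ZZ_\infty G_k \epi \widehat G_k$ provided by Proposition \ref{prop_Z-inf_L0}.

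Third, I would compute this kernel explicitly by comparing the two short exact sequences of the previous lemma, which share the middle term $\widehat E_k$. Under the semidirect product descriptions $\ZZ_\infty G_k = \ZZ\ltimes_{a'_k}\mathcal N$ and $\widehat G_k = \ZZ\ltimes_{a_k}\ZZ_2^2$, the induced map on quotients is the one sending $a^n x^s y^t [x,y]^u$ to $a^n x^s y^t$, so its kernel is the subgroup $\{0\}\ltimes\{[x,y]^u \mid u\in\ZZ_2/\ZZ\} \cong \ZZ_2/\ZZ$. Combining with the previous step gives $\gamma_\omega(\ZZ_\infty G_k)\cong \ZZ_2/\ZZ$. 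The only real obstacle is bookkeeping: making sure the $\ZZ$ generated by $[x,y]$ at the $E_k$-level and the $\ZZ_2$ of commutators at the completion level are distinguished correctly, which is unambiguous thanks to the explicit description of $\mathcal N$.
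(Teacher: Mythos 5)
Your proposal is correct and follows exactly the route the paper intends (the paper leaves this corollary without an explicit proof): quotient the identification $\widehat E_k \cong \ZZ\ltimes_{a'_k}(N\otimes\ZZ_2)$ by the central subgroup generated by $[x,y]$ to get the first isomorphism, then apply Proposition \ref{prop_gamma_omega} (using that $H_1G_k\cong\ZZ\oplus\ZZ/4$ is finitely generated) and read off the kernel $\ZZ_2/\ZZ$ from the two short exact sequences sharing the middle term $\widehat E_k$. No gaps.
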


\begin{Corollary} There is an isomorphism
$${\varprojlim}^1 \BB_nG_k \cong \ZZ_2/\ZZ.$$
\end{Corollary}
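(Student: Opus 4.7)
The plan is a direct combination of Proposition~\ref{prop_gamma_omega} with Corollary~\ref{cor_4.7}. First I would check the hypothesis: the group $G_k=\ZZ\ltimes_{a_k}\ZZ^2$ is finitely generated (by three elements), hence its abelianization $H_1 G_k$ is finitely generated, and Proposition~\ref{prop_gamma_omega} applies. That proposition supplies a natural isomorphism
\[
\gamma_\omega(\ZZ_\infty G_k)\;\cong\;{\varprojlim}^1\,\BB_n G_k.
\]

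Corollary~\ref{cor_4.7} has already identified the left-hand side explicitly as $\ZZ_2/\ZZ$. Composing the two isomorphisms yields the desired description of ${\varprojlim}^1\,\BB_n G_k$.

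There is no real obstacle to overcome; the statement is essentially a translation of the ``$\gamma_\omega$'' computation of Corollary~\ref{cor_4.7} into the language of Baer invariants, via the general comparison supplied by Proposition~\ref{prop_gamma_omega}. As an alternative route, one can instead feed the description of $\ZZ_\infty G_k\cong \ZZ\ltimes_{a_k'}\mathcal N$ and $\widehat G_k\cong \ZZ\ltimes_{a_k}\ZZ_2^2$ from Corollary~\ref{cor_4.7} into the short exact sequence $1\to {\varprojlim}^1\,\BB_n G_k\to \ZZ_\infty G_k\to \widehat G_k\to 1$ of Proposition~\ref{prop_Z-inf_L0}: the quotient map $\mathcal N\epi \ZZ_2^2$ has kernel the cyclic subgroup generated by $[x,y]$, which by construction is $\ZZ_2/\ZZ$, so the kernel of $\ZZ_\infty G_k\to \widehat G_k$ is $\ZZ_2/\ZZ$ and the claim follows.
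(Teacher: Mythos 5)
Your proposal is correct and matches the paper's (implicit) argument: the corollary is exactly the combination of the isomorphism $\gamma_\omega(\ZZ_\infty G_k)\cong{\varprojlim}^1\BB_nG_k$ from Proposition~\ref{prop_gamma_omega} (applicable since $G_k$ is finitely generated) with the identification $\gamma_\omega(\ZZ_\infty G_k)\cong\ZZ_2/\ZZ$ of Corollary~\ref{cor_4.7}, or equivalently with reading off the kernel $\ZZ_2/\ZZ$ of $\ZZ\ltimes_{a_k'}\mathcal N\epi\ZZ\ltimes_{a_k}\ZZ_2^2$ in the short exact sequence of Proposition~\ref{prop_Z-inf_L0}. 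The only nitpick is terminological: in $\mathcal N$ the kernel is not the cyclic group generated by $[x,y]$ (that element is trivial there) but the divisible subgroup $\{[x,y]^u\mid u\in\ZZ_2/\ZZ\}\cong\ZZ_2/\ZZ$.
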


\subsection*{Proof of Theorem \ref{th_manifolds}}
We proved that $\widehat G_k \cong \widehat G_l$ in Lemma \ref{lemma_G_k_iso}.

 Then we only need to prove that $kl\not\equiv 1,7 ({\rm mod}\ 8)$ implies  $\ZZ_\infty G_k \not\cong \ZZ_\infty G_l.$   Assume the contrary, that there is an isomorphsm $ \varphi: \ZZ_\infty G_k \to \ZZ_\infty G_l.$ Then it induces an isomorphism on quotients by $\gamma_\omega$ which are equal to the usual completions $\varphi' : \widehat G_k \to \widehat G_l$ (see Corolary \ref{cor_4.7}) Since $\widehat G_k\cong \ZZ\ltimes \ZZ_2^2,$ we have that the set of $3$-divisible elements of $\widehat G_k$ is $ 0\ltimes \ZZ_2^2.$ Therefore we obtain an isomorphism   $\varphi'':\ZZ_2^2 \to \ZZ_2^2$ and an isomorphism $\tilde \varphi: \ZZ\to \ZZ:$
$$
\begin{tikzcd}
 1\arrow[r]& \ZZ_2^2  \arrow[r]\arrow[d,"\varphi''"] & \widehat G_k \arrow[r]\arrow[d,"\varphi'"] & \ZZ \arrow[r]\arrow[d," \tilde \varphi"] & 1 \\
1\arrow[r] & \ZZ_2^2 \arrow[r] & \widehat G_l \arrow[r] & \ZZ \arrow[r] & 1.
\end{tikzcd}
$$
Note that $\tilde \varphi(1)=\pm 1.$  Therefore for $a=(1,0)\in \widehat{G}_k$ we have
$$ \varphi'(a)=a^{\pm 1} x^{s_0} y^{t_0}$$
for some $s_0,t_0\in \ZZ_2.$
  This implies that $$ \varphi'' a_k  =a_l^{\pm 1} \varphi''.$$
 Any homomorphism $\ZZ_2\to \ZZ_2$ is the multiplication by a 2-adic number. Therefore $\varphi''$ is given by an invertible $2\times 2$-matrix over $\ZZ_2:$
$$ \varphi''= \left(\begin{matrix}
 \varphi_{11} & \varphi_{12}\\
 \varphi_{21} & \varphi_{22}
\end{matrix} \right), \hspace{1cm}  \varphi_{ij}\in \ZZ_2 .$$
The equation $ \varphi'' a_k=a_l^{\pm 1} \varphi''$ implies
$$\left(\begin{matrix}
 -\varphi_{11} &  -\varphi_{12}+ k\varphi_{11}\\
 -\varphi_{21} &   -\varphi_{22} + k\varphi_{21}
\end{matrix} \right)=\varphi '' a_k=a_l^{\pm} \varphi'' = \left(\begin{matrix}
 -\varphi_{11}\pm l\varphi_{21} &   -\varphi_{12}\pm l\varphi_{22}\\
 -\varphi_{21} &   -\varphi_{22}
\end{matrix} \right).$$
Since $k,l\neq 0,$ this implies that $\varphi_{21}=0$ and $k\varphi_{11}= \pm l\varphi_{22}.$ Therefore
$$ \varphi''= \left(\begin{matrix}
 \alpha  l& \beta \\
 0 &\alpha  k
\end{matrix} \right),$$
where $\alpha=\varphi_{22}/k=\pm \varphi_{11}/l $ and $\beta=\varphi_{12}.$ Here we use that odd numbers are invertible in $\ZZ_2.$ Since $\varphi''$ is invertible, $\alpha$ is also invertible in $\ZZ_2.$ If we present elements of $\ZZ_\infty G_k=\ZZ\ltimes \mathcal N $ in the form $a^nx^sy^t[x,y]^u,$ where $n\in \mathbb Z, $ $s,t\in \ZZ_2$ and $u\in \ZZ_2/\ZZ,$ then this means that $$ \varphi(x^s)=x^{\alpha ls} [x,y]^{u_0(s)}, \hspace{1cm} \varphi(y^t)=x^{\beta t} y^{\alpha k t} [x,y]^{v_0(t)}$$
for some $u_0(s),v_0(t)\in \ZZ_2/\ZZ.$ For any $s\in\ZZ_2 $ we have
$$[x^s,y]=[x,y]^{\bar s}.$$
Therefore
$$ \varphi([x,y]^{\bar s})=\varphi([x^{s},y])=[\varphi(x^{s}),\varphi(y) ]=[x^{\alpha l s },x^\beta  y^{\alpha k } ]=[x^{\alpha l s }, y^{\alpha k } ]=[x,y]^{ \overline{\alpha^2s} kl }.$$
Since $\varphi$ is well defined, $s\in \ZZ$ implies $\alpha^2 kls\in \ZZ.$ In particular, $\alpha^2 kl\in \ZZ.$ We set
$$m:=\alpha^2kl\in \ZZ.$$
Since $\alpha,k,l$ are invertible in $\ZZ_2,$ $m$ is odd.
Then $[x,y]^{1/m}$ is a well-defined element of $\mathcal N$ and
$$\varphi([x,y]^{1/m})=[x,y]=1.$$
Since $\varphi$ is an isomorphism, we obtain $[x,y]^{1/m}=1.$
 Therefore $1/m \in \ZZ$ and hence $m=\pm 1.$ Thus
 $$\pm kl=\alpha^{2}.$$
The element $1$  is the only  square in $\ZZ/8.$ Hence $\pm kl \equiv 1 ({\rm mod}\ 8).$ This contradicts our assumption. $\square$


\begin{thebibliography}{99}\bibitem{AkhtiamovIvanovPavutnitskiy}
D. Akhtiamov, S. O. Ivanov, F. Pavutnitskiy. Right exact localizations of groups. 	{\tt arXiv:1905.07612}.

\bibitem{BaumslagStammbach} G. Baumslag, U. Stammbach. On the inverse limit of free nilpotent groups. Commentarii Mathematici Helvetici, 52(1), 219–233 (1977).


\bibitem{Bousfield75} A. K. Bousfield, The localization of spaces with respect to homology, Topology 14 (1975), 133–150.

\bibitem{Bousfield77} A. K. Bousfield. Homological localization  towers  for groups and  $\pi$-modules, Mem. Amer. Math. Soc., vol. 10, no. 186, 1977.

\bibitem{BousfieldKan} A. K. Bousfield,  D. Kan. Homotopy limits, completions and localizations, Lecture Notes in
Mathematics 304, (1972).



\bibitem{BurnsEllis97} J. Burns, G. Ellis. On the nilpotent multipliers of a group. Math. Z. 226(1997),
No. 3, 405–428.

\bibitem{CartanEilenberg} H. Cartan, S. Eilenberg. Homological algebra. Princeton Landmarks in Mathematics. Princeton University Press, Princeton, NJ, 1999.


\bibitem{ChaOrr2013} J.C. Cha and K.E. Orr: Hidden Torsion, 3-Manifolds, and Homology Cobordism, J. Topol. 6 (2013), 490–512.

\bibitem{ChaOrrPrep} J.C. Cha and K.E. Orr: Transfinite Milnor Invariants for 3-Manifolds, 	{\tt arXiv:2002.03208}.

\bibitem{Dold} A. Dold. Homology of symmetric products and other functors of complexes. Ann. Math., 68:54–80, 1958.

\bibitem{DwyerKan} W. G. Dwyer, D. M. Kan. Homotopy theory and simplicial groupoids, Nederl. Akad. Wetensch. Indag. Math. 46 (1984), no. 4, 379–385


\bibitem{Ellis} G. Ellis. A Magnus-Witt type isomorphism for non-free groups, Georgian Math. J. 9 (2002), 703–708.

\bibitem{GoerssJardine} P. G. Goerss, J. F. Jardine, Simplicial homotopy theory. Progress in Mathematics, 174. Birkh¨auser Verlag, Basel, 1999

\bibitem{IM} S.O. Ivanov, R. Mikhailov. On lengths of HZ-localization towers, Isr. J. Math., 226 (2018), 635-683.

\bibitem{Kan} D. Kan, On homotopy theory and c.s.s. groups, Ann. of Math. 68 (1958),
38–53.

\bibitem{Kan76} D. M. Kan. A Whitehead theorem. Algebra, Topology, and Category Theory, edited by A. Heller and M. Tierney, Academic Press, New York, 1976.

\bibitem{Keune} F. Keune. The relativization of $K_2$, J. Algebra 54 (1) (1978), 159–177.

\bibitem{MacDonald} J. L. MacDonald. Group derived functors. J. Algebra 10(1968), 448–477.

\bibitem{MagnusKarrasSolitar} W. Magnus, A. Karras, D. Solitar. Presentations of Groups in Terms of Generators and
Relations, Dover Publications, (2004).

\bibitem{May} P. May. Simplicial objects in algebraic topology. Van Nostrand Mathematical Studies, No. 11. D.
Van Nostrand Co., Inc., Princeton, N.J.-Toronto, Ont.-London, 1967.

\bibitem{MikhailovPassiBook} R. Mikhailov, I. B. S. Passi, Lower central and dimension series of groups,
Lecture Notes in Mathematics, Springer-Verlag, Berlin, 2009.

\bibitem{Milnor} J. Milnor. Isotopy of links, Algebraic Geometry and Topology, Princeton Univ. Press. Princeton,
NJ, (1957).



\bibitem{Quillen} D. Quillen. Homotopical algebra. Lecture Notes in Mathematics, Vol. 43, Springer-Verlag, 1967.

\bibitem{Stallings}  J. Stallings. Homology and central series of groups, J. Algebra 2 (1965), 170–181.

\end{thebibliography}
\end{document}